\newcommand{\ds}{\displaystyle}
\newcommand{\xb}{{\bf{x}}}
\newcommand{\Dn}{\partial_{\nu}}
\newcommand{\cF}{\mathcal{F}(z)}
\newcommand{\cG}{\mathcal{G}(z)}
\theoremstyle{plain}
\newtheorem{theorem}{Theorem}[section]
\newtheorem{lemma}[theorem]{Lemma}
\newtheorem{proposition}[theorem]{Proposition}
\newtheorem{assumption}{Assumption}
\theoremstyle{remark}
\newtheorem{remark}{Remark}[section]
\numberwithin{equation}{section}
\numberwithin{theorem}{section}
\numberwithin{remark}{section}
\begin{document}

\title{Smooth attractors of finite dimension for von Karman evolutions with
nonlinear frictional damping\\
localized in a boundary layer.}
 \author{\begin{tabular}[t]{c@{\extracolsep{2em}}c@{\extracolsep{2em}}c}
           Pelin G. Geredeli  & Irena Lasiecka & Justin T. Webster \\
\it       Hacettepe University & \it University of Virginia & \it University of Virginia \\
\it        Ankara, Turkey & \it Charlottesville, VA &\it Charlotesville, VA\\
\it       pguven@hacettepe.edu.tr & \it il2v@virginia.edu & \it jtw3k@virginia.edu
\end{tabular}}
\maketitle

\begin{abstract}
\noindent In this paper dynamic von Karman equations with localized interior damping
supported in a boundary collar are considered. Hadamard well-posedness for
von Karman plates with various types of nonlinear damping are well-known,
and the long-time behavior of nonlinear plates has been a topic of recent
interest. Since the von Karman plate system is of "hyperbolic type" with
\textit{critical nonlinearity} (noncompact with respect to the phase space),
this latter topic is particularly challenging in the case of \textit{%
geometrically constrained} and \textit{nonlinear} damping. In this paper we
first show the existence of a compact global attractor for finite-energy
solutions, and we then prove that the attractor is both \textit{smooth} and
finite dimensional. Thus, the hyperbolic-like flow is stabilized
asymptotically to a smooth and finite dimensional set.
\vskip.3cm
\noindent Key terms: dynamical systems, long-time behavior, global attractors,
nonlinear plates, nonlinear damping, localized damping
\end{abstract}

\section{Introduction}

We consider the evolution of a nonlinear von Karman plate subject to
nonlinear frictional damping with essential support in a boundary collar.
Our aim is to consider the long-time behavior of the corresponding
evolution. This includes studying (a) existence of global attractor which
captures long-time behavior of the dynamics, and (b) properties of this
attractor, such as smoothness and finite dimensionality.

In short, our goal is to show that the original infinite dimensional and
non-smooth dynamics of hyperbolic type can be reduced (asymptotically) to a
finite dimensional and regular set, with respect to the topology of ``finite
energy". The latter is associated with weak (or generalized) solutions
of the underlying semigroup for the dynamics. This type of result then allows the
implementation of tools from finite dimensional control theory in order to
achieve a preassigned outcome for the dynamics.

\subsection{Model and Energies}

Let $\Omega \subset \mathbb{R}^2$ be a bounded domain with $\partial \Omega
= \Gamma$ taken to be sufficiently smooth. We consider a plate model where
the real-valued function $u(x,y;t)$ models the out-of-plane displacement of
a plate with negligible thickness. Then the von Karman model \cite
{ciarlet,lagnese} requires that $u$ satisfies
\begin{align}  \label{plate}
u_{tt}+\Delta^2 u +d(\mathbf{x})g(u_t)= f_V(u)+p & ~~~\text{ in }~~ \Omega
\times (0,\infty) \equiv Q \\
u\big|_{t=0} = u_0,~~ u_t\big|_{t=0}=u_1.  \notag
\end{align}
The von Karman nonlinearity
\begin{equation}  \label{vonK}
f_V=[v(u)+F_0,u]
\end{equation}
is given in terms of (a) the Airy Stress function $v(u)$, satisfying
\begin{align}  \label{airy}
\Delta^2v(u)=-[u,u] & ~\text{in}~~\Omega \\
\partial_{\nu} v(u)=v(u) = 0 & ~\text{on}~~\Gamma,  \notag
\end{align}
and (b) the von Karman bracket given by
\begin{equation}  \label{bracket}
[u,w]=u_{xx}w_{yy}+u_{yy}w_{xx}-2u_{xy}w_{xy}.
\end{equation}

The internal force $F_0 \in H^{\theta}(\Omega)\cap H_0^1(\Omega) $, $\theta
> 3 $, and external force $p \in L_2(\Omega)$ play an essential role in
shaping the nontrivial stationary solutions. (In this
paper $H^s(D)$ denotes the Sobolev space of order $s \in \mathbb{R}$ on
domain $D$.) In the absence of these
forces, the stationary solution of the corresponding nonlinear boundary
value problem becomes trivial and simply reduces to zero.

In this treatment we focus on the stabilizing properties of the damping term
$d(\mathbf{x})g(u_{t})$. In particular, we take $g(\cdot )\in C(\mathbb{R})$
to be a monotone increasing function, with $g(0)=0$ and further boundedness and
smoothness assumptions to be imposed later; additionally, $d(\mathbf{x})\equiv
d_{\omega }(\mathbf{x})$ is a nonnegative $L_{\infty }(\Omega )$
localizing function which restricts the damping term $g(u_{t})$ to a
particular subset $\omega \subset \Omega $. This is to say $\omega \subset
\text{supp}~d$ or $d(x)\geq c_{0}>0$ for $x\in \omega $. Initially we will take $%
\omega $ to be a general set $\omega \subset \subset \Omega $, but more
specifically, we are interested in taking $\omega $ to be an open collar of
the boundary $\Gamma $. This type of damping represents localized, viscous
damping \textit{near} the boundary $\Gamma $.

The boundary conditions we consider for the plate are:

\begin{enumerate}
\item Clamped, denoted \textbf{(C)}
\begin{equation}  \label{clamp}
u=\partial_{\nu} u = 0 ~~\text{ in }~~ \Gamma \times (0,\infty) \equiv
\Sigma.
\end{equation}

\item Hinged (simply-supported), which we denote by \textbf{(H)}
\begin{equation}  \label{hinge}
u=\Delta u = 0 ~~ \text{ in } ~~ \Sigma.
\end{equation}

\item Free-type, denoted by \textbf{(F)}
\begin{eqnarray}  \label{free}
\mathcal{B}_1 u \equiv \Delta u + (1-\mu) B_1 u = 0~~\mathrm{on}~~ \Gamma_1,
\notag \\
\mathcal{B}_2 u \equiv \partial_{\nu} \Delta u + (1-\mu) B_2 u - \mu_1 u -
\beta u^3 = 0 ~~\mathrm{on}~~ \Gamma_1,  \notag \\
u = \partial_{\nu} u =0 ~\text{(clamped)}~~\mathrm{on}~~ \Gamma_0,
\end{eqnarray}
where we have partitioned the boundary $\Gamma = \Gamma_0 \cup \Gamma_1$
(with $\Gamma_0 $ possibly empty). For simplicity we assume that $\overline{
\Gamma}_0\cap\overline{\Gamma}_1 =\emptyset$. Otherwise the regularity theory for
elliptic problems with \textit{mixed} boundary conditions must be
invoked. The boundary operators $B_1$ and $B_2$ are given by \cite{lagnese}:
\begin{equation*}
\begin{array}{c}
B_1u = 2 \nu_1\nu_2 u_{xy} - \nu_1^2 u_{yy} - \nu_2^2 u_{xx}\;, \\
\\
B_2u = \partial_{\tau} \left[ \left( \nu_1^2 -\nu_2^2\right) u_{xy} + \nu_1
\nu_2 \left( u_{yy} - u_{xx}\right)\right]\,,%
\end{array}%
\end{equation*}
where $\nu=(\nu_1, \nu_2)$ is the outer normal to $\Gamma$, $\tau= (-\nu_2,
\nu_1)$ is the unit tangent vector along $\Gamma$. The parameters $\mu_1$
and $\beta $ are nonnegative, the constant $0<\mu<1$ has the meaning of the
Poisson modulus.
\end{enumerate}

\noindent\textbf{Notation:} Note, when referencing the plate equation above
in (\ref{plate}) we will write (\ref{plate})(C), (\ref{plate})(H), or (\ref%
{plate})(F) to indicate which boundary conditions we are taking. We write the norm in $H^s(D)$  as $||\cdot||_s$ and $||\cdot||_0 \equiv
||\cdot||_{L_2(D)} $; for simplicity (when the meaning is clear from
context) norms and inner products written without subscript (~$(\cdot,\cdot)
$, $||\cdot||$~), are taken to be $L_2(D)$ of the appropriate domain $D$.
Additionally, we employ the notation that $H^s_0(D)$ gives the closure of $%
C_0^{\infty}(D)$ in the $||\cdot||_s$ norm.

\medskip

The von Karman plate equation is well-known in nonlinear elasticity, and
constitutes a basic model to describe the nonlinear oscillations of a thin
plate with large displacements \cite{lagnese} (and references therein). In
particular, we take the thickness of the plate to be negligible (as is usual
in the modeling of thin structures \cite{ciarlet}).

\begin{remark}
It is worth noting that the von Karman plate model can accomodate plates
with non-negligible thickness - the equation in (\ref{plate}) then gives the
vertical displacement of the central plane of the plate. This is tantamount
to adding the term $-\gamma\Delta u_{tt}$, $\gamma>0$ to the LHS of (\ref%
{plate}). This term corresponds to rotational inertia in the filaments of
the plate, and (a) is \textit{regularizing} from the energetic point of view
and (b) forces the dynamics of the plate to be hyperbolic. In this treatment
we take $\gamma =0$, since it constitutes the most challenging problem
mathematically, however, a future manuscript will address the case $\gamma >
0$ and the limiting problem (convergence of solutions and attractors) as $%
\gamma \searrow 0$.
\end{remark}

The energies associated to the above equation are given by (in the case of
clamped (C) or hinged (H) boundary conditions)
\begin{align*}  \label{energy}
E(t)=&\dfrac{1}{2}\big(||\Delta u||^{2}+||u_{t}||^{2}\big), \\
\widehat{E}(t)=&E(t)+\dfrac{1}{4}||\Delta v(u)||^{2}  \notag \\
\mathscr{E} (t)=&E(t)+\Pi (u),  \notag
\end{align*}
where
\begin{equation}
\Pi (u)=\dfrac{1}{4}\int_{\Omega }\big(|\Delta v(u)|^{2}-2[F_{0},u]u-4pu %
\big).  \label{pi}
\end{equation}

The above (linear) energy $E(t)$ dictates our state space $\mathcal{H}$,
which depends on boundary conditions. In the case of clamped boundary
conditions (C) we have $\mathcal{H}_{1}\equiv H_{0}^{2}(\Omega )\times
L_{2}(\Omega )$. For hinged boundary conditions (H) we have $\mathcal{H}
_{2}\equiv (H^{2}\cap H_{0}^{1})(\Omega )\times L_{2}(\Omega )$.

Lastly, for free boundary conditions (F) we have $\mathcal{H}_{3}\equiv
(H^{2}\cap H_{0,\Gamma _{0}}^{2})(\Omega )\times L_{2}(\Omega )$ (where $%
H_{0,\Gamma _{0}}^{2}(\Omega )$ is the Sobolev space $H^{2}(\Omega )$ with
clamped conditions on $\Gamma _{0}$); the potential energy in this case is
given by the bilinear form
\begin{equation}
a(u,v)=\int_{\Omega }\widetilde{a}(u,v)+\mu _{1}\int_{\Gamma _{1}}uv,
\label{a-uw}
\end{equation}%
where
\begin{equation}
\widetilde{a}(u,v)\equiv u_{xx}v_{xx}+u_{yy}v_{yy}+\mu
(u_{xx}v_{yy}+u_{yy}v_{xx})+2(1-\mu )u_{xy}v_{xy}.  \label{a-tild}
\end{equation}%
Then the energy becomes
\begin{equation*}
E(t)=\frac{1}{2}\big\{ ||u_{t}||^{2}+a\big(u(t),u(t)\big)\big\}
\end{equation*}%
\begin{equation*}
\widehat{E}(t)\equiv E(t)+\frac{1}{4}||\Delta v(u)||^{2}+\frac{\beta }{2}%
\int_{\Gamma _{1}}u^{4}d\Gamma .
\end{equation*}%
The total energy becomes
\begin{equation*}
\mathscr{E}(t)=E(t)+\Pi (u(t))+\frac{1}{4}\beta \int_{\Gamma _{1}}u^{4}(t).
\end{equation*}

\begin{remark}
We note that this last form of the energy described by the bilinear form $
a(u,v) $ can also be applied to clamped or hinged boundary conditions.
Indeed, in this latter case the bilinear form $a(u,u) $ collapses just to $
||\Delta u||^2 $.
\end{remark}

It will be convenient to introduce an elliptic operator $A$$:\mathscr{D}
(A)\subset L_{2}(\Omega )\to L_{2}(\Omega )$ given by $Au=\Delta ^{2}u$,
where $\mathscr{D}(A)$ incorporates the corresponding boundary conditions
(clamped, hinged, or free). It is useful to note that by elliptic regularity
\begin{equation*}
\mathscr{D}(A^{1/2})=\left\{
\begin{array}{cc}
H_{0}^{2}(\Omega ) & \text{clamped BC} \\
(H^{2}\cap H_{0}^{1})(\Omega) & \text{hinged BC} \\
(H^{2}\cap H_{0,\Gamma _{0}}^{2})(\Omega ) & \text{free BC}
\end{array}
\right.
\end{equation*}

It is important to note the total potential energy may not be positive, or
even not bounded from below. This is due to the presence of internal force $%
F_0$ which may drive the energy to $- \infty $. However, the presence of the von
Karman bracket in the model, along with appropriate regularity properties
imposed on $F_0$, assures that the energy is bounded from below. This can be
seen from the following lemma \cite{springer,ch-l-jde04}:

\begin{lemma}
\label{l:1} Let $u \in \mathscr{D}(A^{1/2} ) $, $p\in L_2(\Omega),$ and $F_0
\in H_0^1(\Omega) \cap H^{\theta}(\Omega) $, $\theta > 3 $. Then, $\forall
~\epsilon > 0 $ there exists $M\big(\epsilon,||p||, ||F_0||_{\theta}\big)=M_{\epsilon,p,F_0} < \infty $
such that in the clamped and hinged case
\begin{equation*}
||u||^2 \leq \epsilon \big( ||{\mathcal{A}}^{1/2} u||^2 + ||\Delta v(u) ||^2 %
\big) + M_{\epsilon, p, F_0}
\end{equation*}
and in the free case with $\beta > 0 $,
\begin{equation*}
||u||^2 \leq \epsilon \big( ||{\mathcal{A}}^{1/2} u||^2 + ||\Delta v(u) ||^2
+ \frac{\beta}{2} ||u||^4_{L_4(\Gamma) } \big) + M_{\epsilon,p,
F_0,\beta}
\end{equation*}
\end{lemma}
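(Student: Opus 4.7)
I would argue by contradiction, exploiting the quadratic homogeneity $v(cu)=c^2 v(u)$ of the Airy system (\ref{airy}). Suppose the conclusion fails; then there exist $\epsilon_0>0$ and $\{u_n\}\subset\mathscr{D}(A^{1/2})$ with $||u_n||^2 > \epsilon_0\bigl(||A^{1/2}u_n||^2+||\Delta v(u_n)||^2\bigr)+n$. With $\lambda_n=||u_n||$ and $w_n=u_n/\lambda_n$, homogeneity yields $\Delta v(u_n)=\lambda_n^2\Delta v(w_n)$; dividing by $\lambda_n^2$ gives
\[
1 \;>\; \epsilon_0\,||A^{1/2}w_n||^2 + \epsilon_0\,\lambda_n^2\,||\Delta v(w_n)||^2 + n/\lambda_n^2,
\]
which simultaneously forces $\lambda_n\to\infty$, boundedness of $\{w_n\}$ in $\mathscr{D}(A^{1/2})$, and $||\Delta v(w_n)||\to 0$ in $L_2$. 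By the Rellich embedding $\mathscr{D}(A^{1/2})\subset\subset L_2(\Omega)$, a subsequence satisfies $w_n\rightharpoonup w$ in $\mathscr{D}(A^{1/2})$ and $w_n\to w$ strongly in $L_2$, so $||w||=1$ and $w$ inherits the prescribed boundary data.

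To pass to the limit in the Airy map I would exploit the divergence representation
\[
[u,u] \;=\; \partial_y(u_{xx}u_y-u_{xy}u_x)+\partial_x(u_{yy}u_x-u_{xy}u_y),
\]
in which each summand pairs a weakly $L_2$-convergent second derivative with a strongly $L^p$-convergent (via Rellich on $\nabla w_n$) first derivative. This compensated-compactness structure yields $v(w_n)\rightharpoonup v(w)$ in $H_0^2$; combined with the strong convergence $\Delta v(w_n)\to 0$, the weak limit is identified as $v(w)=0$, equivalently $[w,w]=0$ distributionally, i.e.\ $\det D^2 w=0$ a.e.

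The decisive and hardest step is then a Monge--Amp\`ere rigidity: a nonzero $w\in\mathscr{D}(A^{1/2})$ cannot satisfy $[w,w]=0$ together with the clamped or hinged data on $\Gamma$; in the free case the extra $||w||^4_{L_4(\Gamma_1)}$ term on the right-hand side produces $w|_{\Gamma_1}=0$ by compact trace, and the same conclusion follows. Geometrically, a developable graph whose trace---and, under clamping, normal trace---vanishes on $\partial\Omega$ must be flat, since every ruling reaching the boundary is pinned to the plane $\{w=0\}$; at the $H^2$ level this is made rigorous by smooth approximation combined with the integration-by-parts identity $\int_\Omega w\,[w,w] = -\int_\Omega \Delta w\,\Delta v(w)$, which collapses once $v(w)=0$. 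This is the step I expect to be the main obstacle, as it is a nonlinear geometric rigidity statement that is delicate at merely $H^2$ regularity and is typically isolated as a separate lemma. Producing $w\equiv 0$ contradicts $||w||=1$ and closes the argument. The parameters $p$ and $F_0$ play no role in this compactness argument itself; their appearance in $M_{\epsilon,p,F_0}$ accommodates the companion absorption identities $\langle[F_0,u],u\rangle = -\langle \Delta F_0,\Delta v(u)\rangle$ (valid since $\theta>3$ provides $\Delta F_0\in L_2$) and $|\langle p,u\rangle|\le ||p||\cdot||u||$ that are used together with Young's inequality when the lemma is deployed to show $\Pi(u)$ is bounded from below.
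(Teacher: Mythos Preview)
The paper does not supply its own proof of this lemma; it is stated with a citation to \cite{springer,ch-l-jde04}, where the argument is carried out in full. Your overall strategy---contradiction, $L_2$-normalization exploiting the homogeneity $v(\lambda w)=\lambda^{2}v(w)$, extraction of a weak $H^2$ limit $w$ with $\|w\|=1$ and $[w,w]=0$, followed by a Monge--Amp\`ere rigidity step---is precisely the route taken in those references, and your bookkeeping through the line
\[
1>\epsilon_0\|A^{1/2}w_n\|^{2}+\epsilon_0\lambda_n^{2}\|\Delta v(w_n)\|^{2}+n/\lambda_n^{2}
\]
is correct. The passage to the limit in the Airy map is also fine; indeed the sharp regularity of Theorem~\ref{t:3.4} already gives compactness of $u\mapsto v(u)$ from weak-$H^2$ into $H^2_0$, so the divergence representation you invoke is not even needed. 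Your treatment of the free case (the boundary term scales as $\lambda_n^{2}\|w_n\|^{4}_{L_4(\Gamma_1)}$, forcing $w|_{\Gamma_1}=0$ by compactness of the trace) is correct as well, as is your observation that $p$ and $F_0$ do not enter the inequality itself.

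There is, however, a genuine gap at the rigidity step. The identity you propose, $\int_\Omega w\,[w,w]=-\int_\Omega \Delta w\,\Delta v(w)$, is correct but vacuous here: once $[w,w]=0$ (equivalently $v(w)=0$) both sides are already zero, and nothing about $w$ itself is recovered. The statement you actually need---that $w\in H^2(\Omega)$ with $w|_\Gamma=0$ and $[w,w]=0$ forces $w\equiv 0$---does not follow from soft integration by parts. In the cited references it is proved as a separate structural lemma: an $H^2$ function with $\det D^2w=0$ is, up to a set where $D^2w=0$, affine along a rank-one (ruling) direction, and on a bounded domain the zero Dirichlet trace propagates inward along those rulings to give $w\equiv 0$. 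You are right that this is the crux and is typically isolated as its own lemma; but your sketch does not supply it, and ``smooth approximation combined with the integration-by-parts identity'' is not a substitute, since the degenerate Monge--Amp\`ere constraint is not preserved under generic $H^2$ approximation and the identity itself carries no new information once $v(w)=0$.
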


As a consequence we have the following bounds from below for the energy:

\noindent There exist positive constants $m,c,M,C$ such that
\begin{align}
-m+c\widehat{E}(t)& \leq \mathscr{E}(t)\leq M+C\widehat{E}(t)  \label{lowerb}
\\
-m+cE(t)& \leq \mathscr{E}(t)\leq h(E(t))
\end{align}%
where $h(s)$ denotes a continuous function.

\subsection{Motivation and Literature}

Well-posedness for von Karman's plate equation with interior and/or boundary
dissipation has been known for some time for smooth solutions in the case of
homogeneous \cite{ch-1} or inhomogeneous nonlinear boundary conditions \cite%
{springer,fhlt} and references therein. The issue of well-posedness for
`weak' (finite-energy) solutions is more recent \cite{springer,fhlt}. In
this paper, we are interested in homogeneous type boundary conditions and we
will be considering \textit{generalized} nonlinear semigroup solutions \cite%
{barbu,showalter} which also can be shown to be \textit{weak} variational
solutions. For a detailed and complete discussion regarding the wellposedness
and regularity of von Karman solutions the reader is referred to \cite%
{springer,koch}. In the context of this paper we will need the following
well-posedness result, which is contingent upon the recently shown
sharp regularity of the Airy Stress function in (\ref{airy}) \cite%
{fhlt,springer}:

\begin{theorem}
\label{wellp} With reference to problem \ref{plate}(C) with initial data $%
(u_0,u_1)\in \mathcal{H}_1$, or \ref{plate}(H) with initial data $%
(u_0,u_1)\in \mathcal{H}_2$, or \ref{plate}(F) with intial data $%
(u_0,u_1)\in \mathcal{H}_3$, there exists a \textit{unique} global solution
of finite-energy (i.e. $(u,u_t) \in C([0,T];\mathcal{H}_i)$ for $i=1,~2,~3$
resp., for any $T>0$). Additionally, $(u,u_t)$ depends continuously on $%
(u_0,u_1)\in \mathcal{H}_i$.
\end{theorem}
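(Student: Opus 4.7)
The plan is to recast (\ref{plate}) as an abstract nonlinear evolution equation on $\mathcal{H}_i$ of the form $U_t + \mathbb{A} U + \mathcal{N}(U) = P$, split the right-hand side into a maximal monotone part (biharmonic plus damping) and a locally Lipschitz perturbation (von Karman nonlinearity plus forcing), and then invoke standard nonlinear semigroup theory together with a priori energy bounds.

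Concretely, with $U = (u, u_t)^T$ one takes
\[
\mathbb{A}\begin{pmatrix} u \\ v \end{pmatrix} = \begin{pmatrix} -v \\ A u + d(\mathbf{x}) g(v) \end{pmatrix}, \qquad \mathcal{N}(U) = \begin{pmatrix} 0 \\ -f_V(u) \end{pmatrix}, \qquad P = \begin{pmatrix} 0 \\ p \end{pmatrix},
\]
with $\mathscr{D}(\mathbb{A})$ encoding the appropriate (C), (H), or (F) boundary conditions. The first step is to verify that $\mathbb{A}$ is maximal monotone on $\mathcal{H}_i$. Monotonicity follows from the symmetry of $A$ and the monotonicity of $g$; the range condition $\mathrm{Range}(I + \mathbb{A}) = \mathcal{H}_i$ reduces to solving the stationary problem $A u + u + d\, g(u) = F$ for given $F \in L_2(\Omega)$, which follows by Minty's theorem applied to the subdifferential of the associated convex functional on $\mathscr{D}(A^{1/2})$.

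The main obstacle is to prove that $\mathcal{N} : \mathcal{H}_i \to \mathcal{H}_i$ is locally Lipschitz, because the von Karman nonlinearity $f_V$ is critical (noncompact) with respect to the finite-energy topology. Here the sharp regularity of the Airy stress function (\ref{airy}) from \cite{fhlt, springer} is indispensable: it yields, for $u_1, u_2$ in a ball of radius $R$ in $\mathscr{D}(A^{1/2})$,
\[
\|f_V(u_1) - f_V(u_2)\|_0 \leq C(R, \|F_0\|_\theta)\, \|u_1 - u_2\|_2.
\]
With $\mathbb{A}$ maximal monotone and $\mathcal{N}$ locally Lipschitz, the Kato--Barbu perturbation theory for nonlinear contraction semigroups (\cite{barbu, showalter}) produces a unique local-in-time generalized solution $U \in C([0, T_{\max}); \mathcal{H}_i)$; a standard density argument then identifies $U$ with the weak variational solution.

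Globalness and continuous dependence both rest on the energy identity
\[
\mathscr{E}(t) + \int_0^t \int_\Omega d(\mathbf{x})\, g(u_t)\, u_t \, d\mathbf{x}\, ds = \mathscr{E}(0).
\]
Since $d\, g(u_t) u_t \geq 0$, Lemma \ref{l:1} combined with (\ref{lowerb}) yields a uniform bound $\widehat{E}(t) \leq C(\mathscr{E}(0))$ on $[0, T_{\max})$, precluding finite-time blow-up and forcing $T_{\max} = \infty$. Continuous dependence follows in the usual way: the difference of two solutions satisfies an equation whose damping contribution is non-negative (by monotonicity) and whose nonlinear contribution is controlled by the local Lipschitz bound on $f_V$; testing with $u_t^1 - u_t^2$ and applying Gronwall's inequality closes the estimate. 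The entire argument hinges on the local Lipschitz estimate for $f_V$, which is the only genuinely nontrivial ingredient and is provided by the sharp Airy stress function theory cited above.
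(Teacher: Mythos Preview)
The paper does not actually prove Theorem~\ref{wellp}; it is stated as a known result, with the reader referred to \cite{springer,fhlt,koch} for complete details, and the paragraph following the theorem merely records the generator $\mathcal{A}$, its domain, and the energy identity. Your outline is precisely the standard argument those references carry out---maximal monotonicity of the undamped-plus-damping part, local Lipschitz continuity of $f_V$ via the sharp Airy regularity of Theorem~\ref{t:3.4}, then nonlinear semigroup theory \cite{barbu,showalter} for local existence, and the energy identity plus Lemma~\ref{l:1} for globalness and continuous dependence---so your approach is fully aligned with what the paper invokes.

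One small slip worth correcting: in verifying the range condition for $I+\mathbb{A}$, solving $(I+\mathbb{A})(u,v)=(f_1,f_2)$ gives $u-v=f_1$ and $v+Au+d\,g(v)=f_2$, so the stationary problem you must solve is $A(f_1+v)+v+d\,g(v)=f_2$ in the unknown $v$, not ``$Au+u+d\,g(u)=F$'' as written. The monotone-operator argument you sketch still applies, of course.
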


Thus, for any initial data in the finite energy space $(u_{0},u_{1})\in
\mathcal{H}$, there exists a well defined semiflow (nonlinear semigroup) $%
S_{t}(u_{0},u_{1})\equiv \big(u(t),u_{t}(t)\big)\in \mathcal{H}$ which
varies continuously with respect to the initial data in $\mathcal{H}$. The
domain of the corresponding generator ${\mathcal{A}}(u,v)\equiv \Big(v,-Au-d(%
\mathbf{x})g(v)+f_{V}(u)+p\Big)$ is given by $\displaystyle\mathscr{D}({%
\mathcal{A}})=\{(u,v)\in \mathscr{D}(A^{1/2})\times \mathscr{D}%
(A^{1/2});Au+d(\mathbf{x})g(v)\in L_{2}(\Omega )\}$. For initial data
taken in $\mathscr{D}(A)$, the corresponding solutions are regular and
remain invariant in $\mathscr{D}({\mathcal{A}})$ \cite{barbu,Pazy,showalter}. With an additional assumption that $g(s)$ is bounded polynomially at
infinity, one has $\mathscr{D}({\mathcal{A)}}\subset H^{4}(\Omega )\times
H^{2}(\Omega )$. Equipped with the regularity of the domain $\mathscr{D}({%
\mathcal{A}})$, one derives the energy identity for all regular solutions.
Due to the density of the embedding $\mathscr{D}({\mathcal{A}})\subset
\mathcal{H}$, monotonicity of the damping, and sharp regularity of the Airy
stress function (see Lemma \ref{t:3.4}) the same energy equality remains
valid for all generalized solutions corresponding to any boundary conditions
under consideration. Thus we have the energy identity for boundary
conditions (C), (H), or (F) satisfied for all generalized (semigroup)
solutions (complete details of this argument are given in \cite{springer}).

This equality reads: for all $0<s<t$, strong and generalized solutions $u$
to (\ref{plate}) satisfy
\begin{equation}
\mathscr{E}(t)+\int_{s}^{t}\int_{\Omega }d(\mathbf{x})g(u_{t})u_{t}=%
\mathscr{E}(s)  \label{Eident}
\end{equation}%
With the well-posedness of the semiflow established in Theorem 1.1, it is
natural to investigate long time behavior of the dynamical system generated
by (\ref{plate}). It is clear from (\ref{Eident}) that the essential
mechanism for dissipating the energy is the damping term $d(x)g(u_{t})$. In
the simplest possible scenario when $p=F_{0}=0$ the energy function ${%
\mathscr{E}}(t)$ is equivalent topologically to the norm of the phase space $%
\mathcal{H}$. Since ${\mathscr{E}}(t)$ is nonincreasing on the trajectories,
it becomes a Lyapunov function for the corresponding nonlinear dynamical system, whose only
equilibrium is the zero point. If one assumes that $d(x)>0,$ $a.e.$in $ \Omega $, it is well known that ${\mathscr{E}}(t)$ becomes a \textit{strict}
Lyapunov function and zero equilibrium is strongly stable. However, the
above condition imposed on $d(x)$ is \textit{not sufficient} to guarantee
uniform convergence to the equilibrium (this is also the case for \textit{%
linear} dynamics without the von Karman term). In order to secure uniform
convergence or, more generally, convergence to a compact attractor, a
stronger form of the damping is necessary. For example, $d(x)\geq c_{0}>0,$ $%
x\in \Omega $ and $g(s)=as,$ $a>0$, provides a classical model for which
uniform convergence to zero in the absence of external/internal forcing (or
more generally to an attractor) can be shown \cite{ch-0,ch-1,springer,lagnese}. The goal in this paper is to consider nonlinear damping of a reduced
essential support whereby the inequality $d(x)\geq c_{0}>0$ will be enforced
only in a small set $\omega \subset \subset \Omega$, while the dynamics will be forced by nontrivial sources $p,~F_{0}$. Existence of a
compact and possibly smooth finite dimensional attracting set for the
dynamics generated by (\ref{plate}) with boundary conditions (C), (H), or
(F) and geometrically constrained dissipation is of great physical interest.
Such a result is tantamount to asserting that the infinite dimensional,
non-smooth dynamics are asymptotically reduced to a \textit{smooth and
finite dimensional set.} While such a reduction is expected for dynamical
systems that exhibit some smoothing effects (e.g. parabolic-like) \cite%
{temam,milani,eden,raugel,babin,miranville,kalantarov}, it is a much less
evident phenomena in the case of hyperbolic-like dynamics, where the
`taking-off' of the dynamics produces no smoothing effect. The role of the
frictional damping in such a system is instrumental; in fact, it is the
induced friction that creates a stabilizing and asymptotically regularizing
effect on the evolution, ultimately reducing it to a compact set. On the
other hand it is well known that the hyperbolic-like dynamics can not be
stabilized by a compact feedback operator \cite{cbms} (and references
therein). This is due to the fact that instabilities in the system are
inherently infinite dimensional and the essential part of the spectrum can
not be dislodged by a compact perturbation. Thus, any effective damping cannot be compact (with respect to the phase space). The
above feature combined with (a) nonlinearity of the damping and (b) lack of
compactness of the nonlinear von Karman source makes the analysis of
long-time behavior for this class of systems challenging. In fact, critical
exponent nonlinearities and nonlinear dissipation are known to constitute
endemic difficulties in the study of hyperbolic-like systems \cite{fereisel}.

To orient the reader and to provide some perspective for the problem
studied, we shall briefly describe some of the principal contributions to
this area of research. A detailed account is given in \cite{springer}.

In the discussion of global attractors for von Karman evolution equations,
we must distinguish between two types of dynamics for the problem: (a) the
rotational case (as addressed above) when the term $-\gamma\Delta
u_{tt},~\gamma>0$ is added to the LHS of (\ref{plate}) and (b) nonrotational ($
\gamma=0$). In case (a), we note that the von Karman nonlinearity (in the
finite energy topology) is \textit{compact}, which considerably simplifies
the analysis of long-time dynamics. In the latter case (b) (which we
consider here), a very different type of analysis is needed. Here, we shall
focus on part (b) only. In fact, the very first contribution to this problem
is a pioneering paper \cite{ch-1} where the existence of \textit{weak}
attractors with a linear, fully supported damping was demonstrated. Later on,
owing to new results on the regularity of Airy's stress function \cite%
{fhlt,springer}, \textit{weak} attractors were improved to \textit{strong}
attractors, and the restriction of linear damping was removed in order to
allow nonlinear, monotone damping \cite{chlJDE04}. In order to incorporate
fully nonlinear interior damping, \cite{chlJDE04} assumes that the
dissipation parameter is sufficiently \textit{large}. This restriction was
later removed in \cite{kh}, whose paper introduces a very clever way of
bypassing a lack of compactness and replacing it with an ``iterated
convergence" trick. Further studies of the attractor (including properties
such as dimensionality and smoothness) in the fully nonlinear setup, without
``size" restrictions imposed on the parameters, are presented in \cite%
{ch-l-jde07} and in monograph form in \cite{ch-l,springer}.

It should be noted that the results described above pertain to the interior
and \textit{fully supported} dissipation. The situation is much more
delicate when the dissipation is \textit{geometrically constrained}, where
the essential support of the damping is localized to a \textit{subset} of
the spatial domain $\Omega$. In that case, the issue of propagating the
damping from one area to another becomes the critical one. While this sort
of problems has been previously studied in the context of stabilization to
equilibria \cite{lagnese,horn,horn1}, the estimates needed for attractors
are much more demanding. Previous methods developed in the context of
stabilization no longer apply. Some long-time behavior results with boundary
damping are presented in \cite{ch-l-jde04,ch-l-jde07}, wherein nonlinear
dissipation on the boundary acting via \textit{free} boundary conditions is
considered. These works, however, impose the rather stringent geometric
restrictions of the entire boundary being star-shaped. Such
restrictions are removed in \cite{springer}, where dissipation via hinged
boundary conditions is considered; however this is done at the expense of
limiting the class of dissipation to those of \textit{linearly bounded}
type. This restriction is needed since the elimination of the geometric
condition is achieved via microlocal estimates \cite{sharp}, which in
turn force velocity dependent nonlinear terms to be linearly bounded.

This brings us to the main contribution of the present manuscript. Our goal
is to show that the fully \textit{nonlinear} damping with essential support
in an arbitrarily small layer near the boundary provides not only the
existence of compact attractors but also desirable properties such as
smoothness and finite dimensionality. Thus the original hyperbolic-like
non-smooth flow is asymptotically reduced to smooth and finite dimensional
dynamics. The result is valid for \textit{all types of boundary conditions}
with \textit{geometrically constrained dissipation}, which can be nonlinear
of \textit{any polynomial growth} at infinity and with \textit{no
restriction on the size} of the damping parameter.

We obtain this result by proving that the dynamics are \textit{quasi-stable}
- a concept introduced in \cite{ch-l} and \cite{springer}. The ability to
show quasi-stability is dependent upon: (a) a new method of localization of
multipliers that allows smooth propagation of the damping from the boundary
collar into the interior (even in the presence of boundary conditions (free)
that do not comply with the Lopatinski conditions \cite{sakamoto}) and (b)
``backward" smoothness of trajectories from the attractor -
a method used also in \cite{ch-l-jde07} and in \cite{daniel1,bucci} - the
latter in the context of geometrically constrained dissipation for wave
dynamics.

Lastly, we would like to note that while some of the methods
developed for \textit{boundary} dissipation \cite{springer,ch-l-jde07} can
also be used in the case of partially localized dissipation and
Dirichlet - clamped boundary conditions, this is not the case with Neumann
type (free) boundary conditions which violate strong Lopatinski \cite{sakamoto} condition. In this latter case, propagation of the damping from
the boundary layer via boundary damping estimates is obstructed by the well
known lack of sufficient regularity (the absence of so called ``hidden" regularity \cite%
{lions}) of boundary traces corresponding to the linear model \cite{sharp}.
Our aim in this paper is to develop a method which is effective for all kind
of boundary conditions and does not depend on hidden regularity, where the
latter restricts the analysis to Lopatinski type of models. The key element
for this are suitably localized multipliers estimates.

\subsection{Statement of Results}

Equipped with well-posedness of finite energy and regular solutions
corresponding to (\ref{plate}) under one of the boundary conditions (C),
(H), or (F), we are now ready to state our main results pertaining to long
time behavior of solutions. In order to do this, we shall introduce the
following unique continuation condition, denoted $UC$. We say that the
system satisfies the $UC$ property iff the following implication is valid for
any weak solution $(u,u_{t})$ to (\ref{plate}): There exists $T>0$ such
that
\begin{equation*}
u_{t}=0\text{ a.e }~\text{in}~\text{supp}~d\times (0,T)\Rightarrow ~u_{t}=0~%
\text{a.e in}~\Omega \times (0,T).
\end{equation*}%
It is clear that the $UC$ property holds if $d(\mathbf{x})>0~\text{a.e. in}%
~\Omega $.

\begin{remark}
It is worth noting that due to the non-local character of von Karman
nonlinearity, the unique continuation property for the von Karman plate is
not fully understood. A now classical set of tools developed for plate equations
and based on Carleman estimates \cite{tataru,albano,isakov,eller} do not
apply. The non-locality of the von Karman bracket prevents propagation
across the entire domain of \textit{weak} damping localized to a small set.
Therefore, we have the question: \textit{if the damping in the equation (represented
by $d(\mathbf{x})u_{t}$) is zero in an open set of positive measure inside
of $\Omega $, does this imply that the solution $u$ must also be 0 in $\Omega$?}; it remains open. In relation to our analysis here, if the general unique
continuation property holds for the von Karman plate, then it immediately
strengthens our result by allowing $d(\mathbf{x})$ to vanish away from an
open collar of the boundary. However, at present, the best we can state is a
sufficient condition, namely that $d(\mathbf{x})>0$ a.e. in order to satisfy
the $UC$ property.
\end{remark}

\noindent In addition, we shall assume validity of an \textit{asymptotic}
growth condition \textit{from below} imposed on $g(s) $. Such condition is
typical \cite{lagnese} and necessary in order to obtain uniform decay rates
of solutions in hyperbolic-like dynamics. It allows control of the kinetic
energy for large frequencies.

\begin{assumption}
\label{g} There exist positive constants $0<m \leq M < \infty $ and a
constant $p \geq 1 $ such that
\begin{equation*}
m \leq g^{\prime}(s) \leq M |s|^{p }, ~~ |s| \geq 1
\end{equation*}
\end{assumption}

\noindent We now state the primary result in this treatment.

\begin{theorem}
\label{t:1} Take Assumption \ref{g} to be in force. Let $\text{supp}%
~d\supset \omega $ and $d(\mathbf{x}) \ge \alpha_0 >0$ in $\omega$, where $%
\omega \subset \subset \Omega$ is any full collar near the boundary $\Gamma$%
. Then for all generalized solutions corresponding to solutions with initial
data $||(u_0,u_1)||_{\mathcal{H}}\leq R$, there exist compact attractor $%
\mathbf{A}_{R}\in \mathcal{H}$. If, in addition, the \textit{UC} property
holds, then said attractor is global, i.e $\mathbf{A }_{R}=\mathbf{A}$
for all $R>0$.
\end{theorem}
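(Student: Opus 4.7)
The plan is to deduce the theorem from the Hale--Raugel characterization of compact attractors: it suffices to establish (i) a bounded absorbing set in the energy space $\mathcal{H}$ and (ii) asymptotic smoothness of the semiflow $S_t$ on this set. The $UC$ hypothesis enters only at the very end, to upgrade the local attractor $\mathbf{A}_R$ to a global one via the resulting gradient structure.

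Dissipativity is the easy part. The energy identity (\ref{Eident}) gives that $\mathscr{E}(t)$ is nonincreasing along trajectories, and the lower bound $-m+cE(t)\leq \mathscr{E}(t)$ from (\ref{lowerb}) traps any trajectory with $\|(u_0,u_1)\|_{\mathcal{H}}\leq R$ inside a ball whose radius depends only on $R,p,F_0,\beta$. To upgrade mere positive invariance to an absorbing ball $\mathcal{B}_0\subset\mathcal{H}$, I would combine (\ref{Eident}) with the lower bound $g'(s)\geq m$ for $|s|\geq 1$ from Assumption \ref{g} and Lemma \ref{l:1} in order to convert the dissipation integral $\int_\omega d(\mathbf{x})g(u_t)u_t$ into genuine kinetic-energy decay on $\omega$, yielding $\mathscr{E}(t)\leq C$ for all $t\geq t_0(R)$.

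Asymptotic smoothness of $S_t$ is the main obstacle. For two trajectories $u^1,u^2$ remaining in $\mathcal{B}_0$, set $z=u^1-u^2$; the target is a quasi-stability inequality of the form
\[
E_z(T)\;\leq\;C\!\int_0^T\!\int_\omega d(\mathbf{x})\bigl(g(u_t^1)-g(u_t^2)\bigr)(u_t^1-u_t^2)\,d\mathbf{x}\,dt+\Psi_T(u^1,u^2),
\]
where $\Psi_T$ is a functional on trajectories that is compact with respect to $\mathcal{H}$-bounded sequences. Three ingredients will be needed: (a) the sharp regularity of the Airy stress function (\ref{airy}) proved in \cite{fhlt,springer}, which ensures that the difference $f_V(u^1)-f_V(u^2)$ contributes only lower-order compact terms when tested against the multipliers below; (b) \emph{localized} equipartition and flux multipliers, built from cutoffs supported in the collar $\omega$ together with a radial vector field supported in $\Omega\setminus\omega$, which transport the damping estimate from the boundary collar into the interior of $\Omega$ without calling on any trace regularity of $u$ on $\Gamma$ --- this is what makes the argument uniform across all three boundary conditions, and in particular covers case (F), where strong Lopatinski fails and the usual ``hidden'' trace regularity is unavailable; and (c) monotonicity of $g$ combined with its polynomial growth from Assumption \ref{g}, which quantitatively absorbs the nonlinear damping difference. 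The hardest sub-step is controlling the commutator errors generated by the cutoffs near $\partial\omega$: these produce strip-supported terms in $z_t$ and $\nabla z$ which do not belong to the damping region and must be reabsorbed by interpolation together with compactness of lower-order Sobolev embeddings.

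Combining the two stages with the standard Hale-type attractor theorem (see \cite{ch-l,springer}) yields a compact attractor $\mathbf{A}_R\subset\overline{\mathcal{B}_0}$ attracting every bounded subset of the $R$-ball. Under $UC$, the energy identity (\ref{Eident}) together with monotonicity of $g$ promotes $\mathscr{E}$ to a \emph{strict} Lyapunov function: constancy of $\mathscr{E}$ on $[s,t]$ forces $g(u_t)u_t=0$ a.e.~on $\text{supp}\,d\times(s,t)$, hence $u_t=0$ there, and then $u_t=0$ on all of $\Omega\times(s,t)$ by $UC$, so $u$ is a stationary solution. Lemma \ref{l:1} applied to the stationary equation bounds the equilibrium set $\mathcal{N}$ independently of $R$, and the standard representation of attractors of gradient systems as the unstable manifold of $\mathcal{N}$ then forces $\mathbf{A}_R$ to be independent of $R$, giving $\mathbf{A}_R=\mathbf{A}$.
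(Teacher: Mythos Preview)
Your outline follows the paper's general architecture (multipliers localized to the collar, monotone damping estimates, gradient structure under $UC$), but there are two genuine gaps.

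\textbf{First, the ``easy'' dissipativity step is not actually available.} With damping supported only in the collar $\omega$, the dissipation integral $\int_{\omega}d(\mathbf{x})g(u_t)u_t$ controls kinetic energy \emph{on $\omega$ only}; converting this into decay of the full energy $\mathscr{E}$ would itself require propagating the damping into the interior, which is precisely the hard observability step you defer to stage (ii). The paper does not produce an absorbing ball. Instead it uses that the sublevel sets $\mathcal{W}_R=\{\mathscr{E}\leq R\}$ are merely \emph{positively invariant} (immediate from (\ref{Eident})), and combines this with asymptotic smoothness to get local attractors $\mathbf{A}_R$. The global attractor comes only under $UC$, via the gradient structure and Theorem~\ref{gradsmooth}. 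So your stage (i) should be replaced by positive invariance of $\mathcal{W}_R$, nothing more.

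\textbf{Second, and more seriously, your treatment of the von Karman nonlinearity misses the central difficulty.} You write that the sharp Airy regularity makes $f_V(u^1)-f_V(u^2)$ contribute ``only lower-order compact terms when tested against the multipliers.'' That is true for the multipliers $\phi=\lambda z$ and $h\cdot\nabla\psi$ (see the paper's (3.22)--(3.23)), but it is \emph{false} for the term $\int_0^T(\mathcal{F}(z),z_t)$ that inevitably appears once you invoke the energy identity for $z$ to relate $E_z(0)$ to $E_z(T)$. This term is critical --- not compact --- because $f_V$ is a critical-exponent nonlinearity for the nonrotational plate. Your proposed $\Psi_T$ cannot be compact in the usual sense. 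The paper's resolution is to use the weaker Khanmamedov-type criterion (Theorem~\ref{psi}), which requires only $\liminf_m\liminf_n\Psi(y_m,y_n)=0$, and then to establish this iterated limit via a compensated-compactness identity for the bracket (the decomposition (3.30) writing $(\mathcal{F}(z),z_t)$ as a time derivative plus cross terms that vanish under iterated weak limits). Without this device, the asymptotic-smoothness estimate does not close.

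A minor correction: in the paper's construction the cutoffs are arranged so that $\text{supp}\,\nabla\mu\subset\{\lambda\equiv 1\}\subset\omega$, so the commutator strips lie \emph{inside} the damping region, not outside it; this is exactly what allows those errors to be absorbed by the $\phi$-multiplier estimate and then by the damping.
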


\begin{remark}
The $UC$ property is needed in order to construct a \textit{strict} Lyapunov
function for the plate. In the absence of this Lyapunov function, the obtained results are of local character
- as in the first part of Theorem \ref{t:1}. The $UC$ property allows us to conclude
that local attractors coincide with a global one.
\end{remark}

\begin{theorem}
\label{t:2} In addition to Assumption 1 and the \textit{UC} property, assume
that there exists $m,M>0$, and $\gamma<1 $ such that $0<m\leq g^{\prime
}(s)\leq M[1+sg(s)]^{\gamma}$, for all $s \in \mathbb{R} $. Then,

(a) the attractor $\mathbf{A} $ is regular, which is to say $\mathbf{A}%
\subset H^{4}(\Omega )\times H^{2}(\Omega )$ is a bounded set in that
topology.

(b) The fractal dimension of $\mathbf{A} $ is finite.
\end{theorem}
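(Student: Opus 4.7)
The plan is to deduce both conclusions from a single \emph{quasi-stability} estimate on the attractor $\mathbf{A}$, following the framework developed by Chueshov and Lasiecka \cite{ch-l,springer}. Concretely, for any two trajectories $U^i(t) = (u^i(t),u^i_t(t))$, $i=1,2$, lying in a bounded positively invariant absorbing set for the semiflow $S_t$, I aim to establish an inequality of the form
\begin{equation*}
\|U^1(t) - U^2(t)\|_{\mathcal{H}}^2 \leq C e^{-\omega t}\|U^1(0) - U^2(0)\|_{\mathcal{H}}^2 + C_T \sup_{0\leq s\leq t}\|u^1(s) - u^2(s)\|_{H^{2-\delta}(\Omega)}^2
\end{equation*}
for some $\delta>0$ and constants independent of the particular trajectories. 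The technical core is the derivation of this estimate: writing $z = u^1 - u^2$, one tests with $z_t$ together with localized multipliers of the form $\chi z$ and $\eta \cdot \nabla z$, where the cutoffs $\chi,\eta$ are designed to propagate the damping from the collar $\omega$ into the interior. The von Karman bracket is locally Lipschitz on bounded subsets of $H^2$ into $H^{-2+\delta}$ by the sharp Airy-function regularity results \cite{fhlt,springer}, so its contribution is absorbed into the compact seminorm term on the right-hand side.

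Once the quasi-stability inequality is available, part (b) follows immediately from the abstract criterion of \cite{ch-l,springer}: a dynamical system which is quasi-stable on a bounded absorbing set possesses a global attractor of finite fractal dimension in $\mathcal{H}$.

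For the smoothness assertion (a), the strategy is to exploit that trajectories in $\mathbf{A}$ are defined for all $t\in\mathbb{R}$ and stay uniformly bounded in $\mathcal{H}$. Taking time difference quotients $z^{\tau}(t)=[u(t+\tau)-u(t)]/\tau$ and passing rigorously to the limit $\tau\to 0$, the pair $(v,v_t)=(u_t,u_{tt})$ formally satisfies a linearized von Karman system whose damping coefficient is $d(\mathbf{x})g'(u_t)$ and whose source is the Fr\'echet derivative of $f_V$ at $u$. Applying the quasi-stability estimate to this linearized problem (on a backward-extended trajectory) yields a uniform bound on $(u_t,u_{tt})$ in $\mathcal{H}$ along $\mathbf{A}$. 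The strengthened hypothesis $g'(s)\le M[1+sg(s)]^{\gamma}$ with $\gamma<1$ is precisely what allows the contribution $\int_\Omega d(\mathbf{x})g'(u_t)|v_t|^2$ to be absorbed into the dissipation rate of the original nonlinear equation via an interpolation inequality — the sublinear growth exponent $\gamma$ is critical here. Finally, rewriting the plate equation in operator form
\begin{equation*}
Au = -u_{tt} - d(\mathbf{x})g(u_t) + f_V(u) + p,
\end{equation*}
the $L_2$-bound on $u_{tt}$, the $L_2$-bound on $d(\mathbf{x})g(u_t)$ (from the polynomial bound on $g$ combined with $u_t\in H^2\hookrightarrow L_\infty$), and the $L_2$-bound on $f_V(u)$ (from $u\in H^2$ and Airy regularity) combine via elliptic regularity for $A$ under each of the boundary conditions (C), (H), (F) to give $u\in H^4(\Omega)$ uniformly on $\mathbf{A}$.

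The main obstacle I anticipate is the localization step in deriving the quasi-stability inequality. Because $d$ is supported only in a boundary collar $\omega$, the energy identity controls $z_t$ only near $\Gamma$, and the damping must be propagated into $\Omega\setminus\omega$. I expect the hard part to lie in constructing cutoff multipliers whose commutators with the biharmonic operator generate interior boundary traces on $\partial\omega$ that can actually be absorbed — particularly under the free boundary conditions (F), where Lopatinski compatibility fails and the usual hidden regularity of boundary traces is unavailable. Closing the estimate in that case without appealing to hidden regularity is precisely the "new method of localization of multipliers" advertised in the introduction, and it is where the bulk of the work will be concentrated.
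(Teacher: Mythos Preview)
Your overall architecture is inverted relative to what the problem actually allows, and the gap is in the first step. You propose to establish the quasi-stability inequality directly on a bounded absorbing set for \emph{arbitrary} pairs of trajectories, and then read off both (a) and (b). The obstruction is the von Karman source term: your claim that ``the von Karman bracket is locally Lipschitz \ldots\ into $H^{-2+\delta}$ \ldots\ so its contribution is absorbed into the compact seminorm'' does not close the estimate. In the energy identity for $z=u^1-u^2$ the relevant term is $\int_s^t(\mathcal{F}(z),z_t)$, and since $z_t$ is only in $L_2$, the sharp Airy regularity gives at best $C(R)\|z\|_2\|z_t\|$, which is at the energy level --- not a compact remainder. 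The paper's Proposition~4.3 decomposes this term as $\tfrac14\tfrac{d}{dt}Q(z)+\tfrac12P(z)$, and the critical piece $P(z)$ carries a factor $\|u_t\|+\|w_t\|$ in front of $\|z\|_2^2$ (Lemma~4.4). On a generic absorbing set this factor is merely bounded, not small, and the term cannot be absorbed; this is exactly why the paper says ``doing so directly in this case is difficult.''

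The paper therefore runs the argument in the opposite order. It first proves a \emph{restricted} quasi-stability estimate (Lemma~4.1) only for $w(t)=u(t+h)$ along a single trajectory and only for sufficiently negative times, where gradient structure forces $\|u_t(\tau)\|+\|u_t(\tau+h)\|\to 0$ as $\tau\to-\infty$ and the coefficient in front of $\|z\|_2^2$ becomes $\epsilon$-small. This yields $H^4\times H^2$ regularity for negative times (Step~1), which is then propagated forward by well-posedness of strong solutions (Step~2). Boundedness in the higher topology (Step~3) and the full quasi-stability estimate for arbitrary pairs are obtained only \emph{afterwards}, and only on the attractor itself, using its compactness in $\mathcal{H}$ via a finite $\epsilon$-net $\{\phi_j\}\subset H_0^2$ to replace $u_t,w_t$ in $P(z)$ by smooth elements --- this is what makes the $P(z)$ term genuinely lower order. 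Finite dimensionality then follows from Theorem~2.3 with $B=\mathbf{A}$, not with $B$ an absorbing ball. Your plan for (a) via difference quotients on backward trajectories is close to the paper's Step~1, but you cannot feed it a quasi-stability estimate you do not yet have; and the hypothesis $g'(s)\le M[1+sg(s)]^\gamma$ enters not in a linearized damping term but in controlling $\int_Q\mathcal{G}(z)z$ and $\int_Q\mathcal{G}(z)h\nabla z$ quadratically in $z$ (Proposition~4.2), which is what keeps the lower order terms homogeneous of degree two so that division by $h$ is legitimate.
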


\begin{remark}
If we consider $g(s) = |s|^{p+1}$, then one can show that $\gamma = \dfrac{p%
}{ p+2}$ satisfies the above condition.
\end{remark}

There are three main difficulties/novelties pertaining to the proof of the
results stated above:

(a) The nonlinear source is of critical exponent (lack of compactness).

(b) The essential damping is geometrically constrained to a small subset $%
\omega$.

(c) The damping is genuinely non-linear (any polynomial growth at the
infinity is allowed).

\noindent These three difficulties are well-recognized in the context of studying long
time behavior of hyperbolic-like systems where there is no inherent
smoothing mechanism present in the model. In order to provide some
perspective, it helps to add that geometrically constrained damping forces
to use higher order multipliers which become \textit{supercritical} when
dealing with energy terms and nonlinear critical terms. Thus, any successful
approach must rely on suitable cancellations, which must be uncovered for
the specific dynamics in question.

Similar issues appear when dealing with nonlinear damping. The damping term
must be critical (in hyperbolic dynamics) in order to be effective (we
recall that the essential spectrum of an operator can not be altered by a
compact perturbation). The property of monotonicity of the problem does help
when dealing with a single solution at the energy level. However, when
dealing with long-time behavior, the protagonist is not a single solution
but the difference of two solutions. In the study of the corresponding
dynamics at the non-energetic levels (resulting from multipliers),
monotonicity is destroyed. There is a ``spillover" of the noncompact (in
fact, supercritical) damping that must be absorbed. For this issue,
different mechanisms need to be discovered (e.g. backward smoothness of
trajectories, compensated compactness, etc).

While recent developments in the field provide tools enabling us to handle a
combination of \textit{any two} of the difficulties listed above, the
\textit{inclusion of the third} prevents us from utilizing existing
mathematical technology. The principal contribution of this treatment is to
develop method which is capable of dealing with all three aforementioned
difficulties simultaneously. The main ingredients of this new approach are
(i) a localization method which allows us to show propagation of the damping
without any requiring that the Lopatinski condition be satisfied, and (ii)
backward smoothness of trajectories from the attractor with geometrically
localized dissipation.

We conclude this section by listing few problems that are of interest to
pursue and still open. \vskip.1cm \noindent \textbf{(1) $C^{\infty}$
smoothness of attractors in the presence of nonlinear damping}

Regarding the first item, $C^{\infty}$ smoothness of an attractor can be
proved under certain restrictions on nonlinear damping by methods developed
in \cite{springer} and also in an influential paper \cite{guidaglia}. The
treatment of a fully nonlinear and monotone dissipation is still not fully
understood. \vskip.1cm \noindent \textbf{(2) Damping restricted to a portion
of an open collar}

Secondly, dissipation localized to part of the collar could be considered by
assuming certain geometric conditions imposed on the uncontrolled part of
the collar. Certain ideas presented in \cite{daniel1,bucci} should prove
useful. \vskip.1cm \noindent \textbf{(3) The $UC$ property for a larger class of
dampings}

Lastly, the third item is an wide open and important problem. Other mild forms of dissipation - such as viscoelastic weak damping - are also
valid alternatives, however, a full understanding of this problem would
require the development of a substitute for Carleman estimates, which are
not applicable due to localized structure of the von Karman bracket. The
associated problem is related to controlling low frequencies - an endemic
problem principally associated with strong stability \cite{cbms,jde}. It is
worth noting that this problem is non-existent in the case of other
nonlinear plates, such as semilinear plates with local nonlinearities or
even Berger's model \cite{berger}, where some forms of Carleman's estimates
do apply \cite{albano,tataru}.

\section{Long-time Behavior of Dynamical Systems}

\indent In this manuscript we will make ample use dynamical systems
terminology (see \cite{babin,miranville,raugel,ch-0,springer}); let $(%
\mathcal{H},S_t)$ be a dynamical system with $\mathscr{N}\equiv \{x\in%
\mathcal{H}:S_tx=x~\text{ for all }~ t \ge 0\}$ the set of its stationary
points.

We say that a dynamical system is \textit{asymptotically compact} if there
exists a compact set $K$ which is uniformly attracting: for any bounded set $
D\subset \mathcal{H}$ we have that
\begin{equation}  \label{dist}
\lim_{t\to+\infty}d_{\mathcal{H}}\{S_t D|K\}=0
\end{equation}
in the sense of the Hausdorff semidistance.

$(\mathcal{H},S_t)$ is said to be \textit{asymptotically smooth} if for any
bounded, forward invariant $(t>0) $ set $D$ there exists a compact set $K
\subset \overline{D}$ such that (\ref{dist}) holds. An asymptotically smooth
dynamical system should be thought of as one which possesses \textit{local
attractors}, i.e. for a given ball $B_R(x)$ of radius $R$ in the space $
\mathcal{H}$ there exists a compact attracting set in the closure of $B_R(x)$%
, however, this set need not be uniform with respect to $R$ or $x \in
\mathcal{H}$.

A \textit{global attractor} $\mathbf{A}$ is a closed, bounded set in $%
\mathcal{H}$ which is invariant (i.e. $S_t\mathbf{A}=\mathbf{A}$ for all $%
t>0 $) and uniformly attracting (as defined above).

A \textit{strict Lyapunov function} for $(\mathcal{H},S_t)$ is a functional $%
\Phi$ on $\mathcal{H}$ such that (a) the map $t \to \Phi(S_tx)$ is
nonincreasing for all $x \in \mathcal{H}$, and (b) $\Phi(S_tx)=\Phi(x)$ for
all $t>0$ and $x \in \mathcal{H}$ implies that $x$ is a stationary point of $%
(\mathcal{H},S_t)$. If the dynamical system has a strict Lyapunov function,
then we say that $(\mathcal{H},S_t)$ is \textit{gradient}.

In the context of this paper we will use a few keys theorems (which we now
formally state) to prove the existence of the attractor. (For proofs and
references, see \cite{springer} and references therein.) First, we address
attractors for gradient systems and characterize the attracting set:

\begin{theorem}
\label{gradsmooth} Suppose that $(\mathcal{H},S_t)$ is a gradient,
asymptotically smooth dynamical system. Suppose its Lyapunov function $\Phi
(x)$ is bounded from above on any bounded subset of $\mathcal{H}$ and the
set $\Phi _{R}\equiv \{x\in \mathcal{H}:\Phi (x)\leq R\}$ is bounded for
every $R$. If the set of stationary points for $(\mathcal{H},S_t)$ is
bounded, then $(\mathcal{H},S_t)$ possesses a compact global attractor $%
\mathbf{A}$ which coincides with the unstable manifold, i.e.
\begin{equation*}
\mathbf{A}=\mathscr{M}^{u}(\mathscr{N})\equiv \{x\in \mathcal{H}:~\exists
~U(t)\in \mathcal{H},~\forall ~t\in \mathbb{R}~\text{ such that }~U(0)=x~%
\text{ and }~\lim_{t\rightarrow -\infty }d_{\mathcal{H}}(U(t)|\mathscr{N}%
)=0\}.
\end{equation*}
\end{theorem}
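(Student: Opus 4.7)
The plan is to prove the theorem in three steps. First, I would establish that the system is bounded dissipative, which together with asymptotic smoothness yields a compact global attractor $\mathbf{A}$ by the classical Ladyzhenskaya--Hale criterion. Second, I would use the Lyapunov function and a LaSalle-type argument to show that $\omega$- and $\alpha$-limit sets of bounded trajectories are contained in $\mathscr{N}$. Third, I would combine these two facts to identify $\mathbf{A}$ with the unstable manifold $\mathscr{M}^u(\mathscr{N})$.

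For the first step, fix any bounded $B \subset \mathcal{H}$. Since $\Phi$ is bounded above on $B$, there exists $R_B < \infty$ with $B \subset \Phi_{R_B}$; monotonicity of $t \mapsto \Phi(S_t x)$ then forces $S_t B \subset \Phi_{R_B}$ for all $t \ge 0$, and $\Phi_{R_B}$ is bounded by hypothesis. Hence $\bigcup_{t \ge 0} S_t B$ is bounded and bounded dissipativity follows. For the LaSalle step, one observes that for any $x \in \mathcal{H}$ the forward orbit $\{S_t x\}_{t \ge 0}$ is trapped in the bounded sublevel set $\Phi_{\Phi(x)}$, so by asymptotic smoothness $\omega(x)$ is nonempty, compact, and invariant. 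The monotone quantity $\Phi(S_t x)$ is bounded below on the bounded orbit and hence converges to some $c$; continuity and invariance of $\omega(x)$ force $\Phi \equiv c$ on $\omega(x)$, and condition (b) in the definition of strict Lyapunov function then yields $\omega(x) \subset \mathscr{N}$. The same reasoning applied to complete bounded trajectories $U : \mathbb{R} \to \mathbf{A}$ (which exist by invariance of $\mathbf{A}$) shows $\alpha(U) \subset \mathscr{N}$.

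For the identification of $\mathbf{A}$ with $\mathscr{M}^u(\mathscr{N})$: if $x \in \mathbf{A}$, invariance $S_t \mathbf{A} = \mathbf{A}$ furnishes a full trajectory $U(\cdot) \subset \mathbf{A}$ with $U(0) = x$; compactness of $\mathbf{A}$ makes $\alpha(U)$ nonempty and the preceding step places it in $\mathscr{N}$, so $x \in \mathscr{M}^u(\mathscr{N})$. Conversely, if $x \in \mathscr{M}^u(\mathscr{N})$ with defining backward trajectory $U$, the fact that $d_{\mathcal{H}}(U(t), \mathscr{N}) \to 0$ as $t \to -\infty$ together with the boundedness of $\mathscr{N}$ gives that $\{U(t) : t \le 0\}$ is bounded; since the forward orbit is bounded by the dissipativity step, the entire trajectory is bounded, so by the uniform attracting property of $\mathbf{A}$ each $U(-s) $ lies in every prescribed neighborhood of $\mathbf{A}$ for large $s$, and closedness plus invariance yield $x = S_s U(-s) \in \mathbf{A}$. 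The main subtlety is that monotonicity of $\Phi$ alone must be upgraded into asymptotic trapping of orbits in compact regions, which is exactly what the hypothesis of bounded sublevel sets $\Phi_R$ provides; without it, orbits could escape to infinity along decreasing values of $\Phi$ and the LaSalle mechanism would fail.
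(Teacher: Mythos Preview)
The paper does not actually prove this theorem: it is stated as a background result with the remark ``for proofs and references, see \cite{springer} and references therein.'' Your three-step outline (boundedness of orbits via sublevel sets, LaSalle-type identification of $\omega$- and $\alpha$-limit sets with $\mathscr{N}$, and the two inclusions $\mathbf{A}\subset\mathscr{M}^u(\mathscr{N})$ and $\mathscr{M}^u(\mathscr{N})\subset\mathbf{A}$) is precisely the standard argument one finds in that reference and in the Hale--Ladyzhenskaya--Raugel tradition, so in spirit you are reproducing the cited proof rather than offering an alternative.

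One point of phrasing deserves tightening. In Step~1 you conclude ``bounded dissipativity follows'' from the fact that $\bigcup_{t\ge 0}S_tB\subset\Phi_{R_B}$. Strictly speaking this only shows that orbits of bounded sets are bounded; it does not by itself produce a single bounded absorbing set, which is what ``bounded dissipative'' usually means. The missing ingredient is point dissipativity, and you do supply it---but only in Step~2, where the LaSalle argument gives $\omega(x)\subset\mathscr{N}$ and boundedness of $\mathscr{N}$ then makes any bounded neighborhood of $\mathscr{N}$ a point-absorbing set. The correct invocation of the Hale criterion is therefore: asymptotically smooth $+$ point dissipative $+$ orbits of bounded sets bounded $\Rightarrow$ compact global attractor. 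Your argument contains all three pieces, just slightly out of order relative to the sentence claiming ``bounded dissipativity.'' The identification step and the reverse inclusion $\mathscr{M}^u(\mathscr{N})\subset\mathbf{A}$ via uniform attraction of the bounded full trajectory are carried out correctly.
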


Secondly, we state a useful criterion (inspired by \cite{kh}) which reduces
asymptotic smoothness to finding a suitable functional on the state space
with a compensated compactness condition:

\begin{theorem}
\label{psi} Let $(\mathcal{H},S(t))$ be a dynamical system, $\mathcal{H}$
Banach with norm $||\cdot||$. Assume that for any bounded positively
invariant set $B \subset \mathcal{H}$ and for all $\epsilon>0$ there exists
a $T\equiv T_{\epsilon,B}$ such that
\begin{equation*}
||S_Tx_1 - S_Tx_2||_{\mathcal{H}} \le
\epsilon+\Psi_{\epsilon,B,T}(x_1,x_2),~~x_i \in B
\end{equation*}
with $\Psi$ a functional defined on $B \times B$ depending on $\epsilon, T,$
and $B$ such that
\begin{equation*}
\liminf_m \liminf_n \Psi_{\epsilon,T,B}(x_m,x_n) = 0
\end{equation*}
for every sequence $\{x_n\}\subset B$. Then $(\mathcal{H},S_t)$ is an
asymptotically smooth dynamical system.
\end{theorem}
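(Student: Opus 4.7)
The plan is to prove asymptotic smoothness via its standard sequential reformulation: $(\mathcal{H},S_t)$ is asymptotically smooth iff for every bounded positively invariant set $B\subset \mathcal{H}$, every sequence $\{S_{t_n}x_n\}$ with $x_n\in B$ and $t_n\to+\infty$ admits a Cauchy subsequence in $\mathcal{H}$. Once this sequential compactness on the orbit is established, the $\omega$-limit set
\begin{equation*}
\omega(B) := \bigcap_{s\ge 0}\overline{\bigcup_{t\ge s} S_tB}
\end{equation*}
is a nonempty, compact subset of $\overline{B}$ which uniformly attracts $B$; this is exactly the compact attracting set required by the definition of asymptotic smoothness given earlier in the section.

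Fix a bounded positively invariant $B$, a sequence $\{x_n\}\subset B$, and times $t_n\to+\infty$, and set $y_n := S_{t_n}x_n\in B$. For each $k\ge 1$, invoke the hypothesis with $\epsilon_k := 1/k$ to obtain a time $T_k = T_{\epsilon_k,B}$ and a functional $\Psi_k$ on $B\times B$. For $n$ sufficiently large that $t_n\ge T_k$, write $y_n = S_{T_k}(z_n^{(k)})$ with $z_n^{(k)} := S_{t_n-T_k}x_n$; positive invariance of $B$ places $z_n^{(k)}\in B$, so the hypothesis yields
\begin{equation*}
||y_n - y_m||_{\mathcal{H}} \le \epsilon_k + \Psi_k\bigl(z_n^{(k)},z_m^{(k)}\bigr),
\end{equation*}
and the double-$\liminf$ property of $\Psi_k$ gives, for each fixed $k$,
\begin{equation*}
\liminf_m \liminf_n ||y_n - y_m||_{\mathcal{H}} \le \epsilon_k.
\end{equation*}

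The final step is a Cantor-style diagonal extraction. Starting from $\{y_n\}$, I would build nested subsequences $\{y_n\}\supset \{y_n^{(1)}\}\supset \{y_n^{(2)}\}\supset\cdots$ such that, for every $k$, any two sufficiently late terms of $\{y_n^{(k)}\}$ are within $2\epsilon_k$ of one another in $\mathcal{H}$; the diagonal sequence $\{y_{n_k}^{(k)}\}$ is then Cauchy and hence convergent by completeness of $\mathcal{H}$. The main (and essentially only) technical obstacle is converting $\liminf_m \liminf_n \Psi_k(\cdot,\cdot)=0$ into an honest extraction rule at each level: given the previous subsequence, unpack the outer $\liminf_m$ to select $m_k$ with $\liminf_n \Psi_k(z_{m_k}^{(k)},z_n^{(k)}) < \epsilon_k$, then use the inner $\liminf$ to thin out further, repeating so that the estimates at all earlier levels are preserved. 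This bookkeeping is routine but must be performed carefully; once done, the sequential criterion of the first step is verified and the theorem follows.
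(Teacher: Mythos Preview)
The paper does not actually prove this theorem; it is stated as an abstract tool with the comment ``(For proofs and references, see \cite{springer} and references therein).'' Your proposal is essentially the standard argument found in that monograph (and in the Ceron--Lopes / Khanmamedov line it credits): reduce asymptotic smoothness to sequential precompactness of orbits $\{S_{t_n}x_n\}$ on a positively invariant set, then use the $\Psi$-estimate at scales $\epsilon_k\downarrow 0$ together with a diagonal extraction. So you have reconstructed the intended proof.

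Two small points worth tightening. First, watch the argument order in $\Psi_k$: the hypothesis gives $\liminf_m\liminf_n\Psi_k(a_m,a_n)=0$ with the outer index in the first slot, whereas you wrote $\Psi_k(z_n^{(k)},z_m^{(k)})$. This is harmless because $\|y_n-y_m\|$ is symmetric, so you may equally apply the estimate with $x_1=z_m^{(k)}$, $x_2=z_n^{(k)}$; just make the write-up consistent. Second, the extraction at level $k$ does not directly yield ``any two sufficiently late terms within $2\epsilon_k$.'' What the iterated $\liminf$ actually produces is an anchor index $m_k$ and an infinite set of $n$'s with $\|y_{m_k}-y_n\|\le 2\epsilon_k$; pairwise closeness on that subsequence then follows from the triangle inequality with constant $4\epsilon_k$. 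With these cosmetic adjustments the diagonal scheme goes through, the resulting subsequence is Cauchy, and your appeal to $\omega(B)$ as the compact attracting set inside $\overline{B}$ is exactly the standard conclusion.
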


In order to establish both smoothness of the attractor and finite
dimensionality a stronger estimate on the difference of two flows is needed.

\begin{theorem}
\label{t:FD} Let $x_{1},x_{2}\in B\subset \mathcal{H}$ where $B$ is a
forward invariant set for the flow $S_{t}x_{i} $.
Assume that the following inequality holds for all $t>0$ with positive
constants $C_{1}(B),C_{2}(B),\omega _{B}$
\begin{equation}
||S_{t}x_{1}-S_{t}x_{2}||_{\mathcal{H}}^{2}\leq C_{1}(B)e^{-\omega
_{B}t}||x_{1}-x_{2}||_{\mathcal{H}}^{2}+C_{2}(B)\max_{\tau \in \lbrack
0,t]}|| S_{\tau} x_1 - S_{\tau} x_2 ||_{\mathcal{H}_1}^{2}  \label{quasi}
\end{equation}%
where $\mathcal{H} \subset\mathcal{H}_1 $ is compactly embedded. Then the
attractor $\mathbf{A}$ associated with the flow $S_{t}$ posesses the
following properties: \newline
(a) The fractal dimension of $\mathbf{A}$ is finite. \newline
(b) For any $x \in \mathbf{A} $ one has $\dfrac{d}{dt}\big( S_t x \big)\in
C(R, \mathcal{H} ) $.
\end{theorem}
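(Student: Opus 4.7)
Both parts should follow from the quasi-stability estimate (\ref{quasi}) via the abstract machinery of Chueshov--Lasiecka; the compact embedding $\mathcal{H}\hookrightarrow \mathcal{H}_1$ plays the role of a compensated-compactness device that converts (\ref{quasi}) into finite-dimensional and smoothness structure on the attractor.

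For part (a), I would fix $T>0$ large enough that $C_1(B)e^{-\omega_B T}\leq q^2$ with $q<1/2$, and apply (\ref{quasi}) on $B=\mathbf{A}$ to rewrite the one-step map $V\equiv S_T$ as
\begin{equation*}
||Vx_1-Vx_2||_{\mathcal{H}}\;\leq\; q\,||x_1-x_2||_{\mathcal{H}}+n_T(x_1,x_2),\qquad n_T(x_1,x_2):=C_2^{1/2}\max_{\tau\in[0,T]}||S_\tau x_1-S_\tau x_2||_{\mathcal{H}_1}.
\end{equation*}
The seminorm $n_T$ is compact on $\mathbf{A}\times \mathbf{A}$: the trajectory map $x\mapsto S_\cdot x$ is continuous from $\mathbf{A}$ into $C([0,T];\mathcal{H})$, and the compact embedding $\mathcal{H}\hookrightarrow \mathcal{H}_1$ makes the image relatively compact in $C([0,T];\mathcal{H}_1)$. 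The Ladyzhenskaya--Chueshov fractal dimension criterion (cover $\mathbf{A}$ by finitely many $n_T$-balls of a fixed radius, apply the contraction factor $q<1/2$, and iterate while tracking the number of balls at each scale) then yields a finite bound on $\dim_f \mathbf{A}$.

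For part (b), I would exploit invariance of $\mathbf{A}$: for each $x\in\mathbf{A}$ there is a complete trajectory $U:\mathbb{R}\to\mathbf{A}$ with $U(0)=x$, and for every $h\in\mathbb{R}$ the shifted trajectory $U_h(\cdot)\equiv U(\cdot+h)$ also lies entirely in $\mathbf{A}$. Applying (\ref{quasi}) with data $U(s),U(s+h)$ over the interval $[s,t]$ gives
\begin{equation*}
||U(t)-U(t+h)||^{2}_{\mathcal{H}}\;\leq\; C_1 e^{-\omega_B (t-s)}||U(s)-U(s+h)||^{2}_{\mathcal{H}}+C_2\max_{\sigma\in[s,t]}||U(\sigma)-U(\sigma+h)||^{2}_{\mathcal{H}_1}.
\end{equation*}
Because $\mathbf{A}$ is bounded in $\mathcal{H}$, the first term vanishes as $s\to -\infty$, leaving $\sup_t||U(t+h)-U(t)||^{2}_{\mathcal{H}}\leq C_2\sup_\sigma||U(\sigma+h)-U(\sigma)||^{2}_{\mathcal{H}_1}$. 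An a priori bound on the time derivative in a weaker space (coming from the PDE: $\dot U=(u_t,u_{tt})$ with $u_t\in L_\infty(\mathbb{R};L_2(\Omega))$ from boundedness of $\mathbf{A}$ in $\mathcal{H}$, and $u_{tt}$ read off from (\ref{plate}) as an element of a space containing $\mathcal{H}_1$) supplies $||U(\sigma+h)-U(\sigma)||_{\mathcal{H}_1}\leq K|h|$, so substitution yields the uniform Lipschitz estimate $||U(t+h)-U(t)||_{\mathcal{H}}\leq K'|h|$.

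The main obstacle is turning this uniform Lipschitz bound into the claimed existence and continuity of $\dot U$ as a curve in $\mathcal{H}$. My plan is to regard the difference quotients $V_h(t):=h^{-1}(U(t+h)-U(t))$ as uniformly bounded in $L_\infty(\mathbb{R};\mathcal{H})$, pass to a weak-$\star$ limit $V\in L_\infty(\mathbb{R};\mathcal{H})$, and then upgrade to strong convergence and continuity by applying the quasi-stability inequality a \emph{second} time, now to the linearized difference dynamics $V_h-V_{h'}$. The delicate point is that this second application requires controlling the linearized damping $d(\mathbf{x})g'(u_t)\dot u_t$, which is precisely why the additional growth restriction $g'(s)\leq M[1+sg(s)]^\gamma$ with $\gamma<1$ is imposed in Theorem \ref{t:2}: without it, the derivative dynamics cannot be shown to satisfy a quasi-stability estimate of their own, and the Lipschitz bound cannot be promoted to differentiability with $\mathcal{H}$-continuous time derivative.
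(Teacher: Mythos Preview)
The paper does not give a self-contained proof of Theorem~\ref{t:FD}; it only remarks that the argument rests on the method of $\ell$-trajectories of M\'alek--Pra\v{z}\'ak and defers to \cite{malek,prazak,ch-l,springer}. Your sketch of part~(a) --- fixing $T$ so that the discrete map $S_T$ is a strict contraction modulo the compact seminorm $n_T$, and then running the Ladyzhenskaya-type covering argument --- is precisely the route taken in those references and is correct.

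Your treatment of (b) contains a confusion of levels. The first half is right: applying (\ref{quasi}) to the pair $U(s),\,U(s+h)$ and sending $s\to-\infty$ yields the uniform Lipschitz bound $\|U(t+h)-U(t)\|_{\mathcal H}\le K'|h|$, once one has $\|U(\sigma+h)-U(\sigma)\|_{\mathcal H_1}\le K|h|$ (which, in the second-order setting where $\mathcal H_1$ measures only the displacement component, follows from boundedness of the velocity). The gap is in your final step. The proposed ``second application'' of (\ref{quasi}) to $V_h-V_{h'}$ is not legitimate at the abstract level: $V_h-V_{h'}$ is \emph{not} of the form $S_\tau x_1-S_\tau x_2$ for points $x_1,x_2\in B$, so the hypothesis of the theorem does not cover it. You then try to repair this by invoking the bound $g'(s)\le M[1+sg(s)]^\gamma$ from Theorem~\ref{t:2}, but that is a category error. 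In the paper this growth condition is used to \emph{verify} the quasi-stability hypothesis (\ref{quasi}) for the specific von~Karman flow (see Proposition~\ref{p:1} and the surrounding estimates in Section~4), not to extract consequences from (\ref{quasi}) once it is assumed. Theorem~\ref{t:FD} is purely abstract and knows nothing about $g$.

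The standard abstract completion of (b), as carried out in \cite{ch-l,springer}, is more direct than what you propose: the uniform Lipschitz bound on $t\mapsto U(t)$ with values in the Hilbert space $\mathcal H$ already gives, via the Radon--Nikodym property of reflexive spaces, that $U$ is differentiable for a.e.\ $t$ with $\dot U\in L_\infty(\mathbb R;\mathcal H)$; this is the principal conclusion in the cited references. No linearized quasi-stability and no control of $g'$ enter at this stage.
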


\begin{remark}
The estimate in (\ref{quasi}) is often referred to as a ``quasistability"
estimate. It reflects the fact that the flow can be stabilized exponentially
to a compact set. Alternatively, we might say that the flow is exponentially
stable, modulo a compact perturbation (lower order terms). We assert that
the lower order terms being quadratic is important for the validity of
Theorem \ref{t:FD}.
\end{remark}

The proof of Theorem \ref{t:FD} employs the idea of ``piecewise" trajectories
introduced in \cite{malek,prazak}. This allows to generalize previous
criteria for finite-dimensionality \cite{babin,temam,raugel,eden} by
reducing the problem to validity of quasistable estimate.

\subsection{Approach and Outline of the Paper}

\indent To show our main result on the existence of the global attractor for
(\ref{plate}) with boundary conditions (C), (H), or (F) we make use of the
theorems above. First, we note that in the case of any boundary conditions,
the von Karman system in (\ref{plate}) is gradient with Lyapunov function $%
\mathscr{E}(t)$ (under the assumption that the $UC$ property is satisfied,
e.g. in the case that $d(\mathbf{x})>0$ a.e. in $\Omega$). We refer to \cite%
{springer} for the details. Moreover, the set of stationary points for the
dynamical system generated by (\ref{plate})(C), (\ref{plate})(H), or (\ref%
{plate})(F) is bounded. This latter fact follows from (\ref{lowerb}) (see
\cite{springer}). Hence we are in a position to use Theorem \ref{gradsmooth}
if we can obtain an inequality of the form in Theorem \ref{psi} to show
asymptotic smoothness of the system. We will analyze $z$, taken to be the
difference of strong solutions, and make use of the linear energy $%
\displaystyle E_z(t)=||\Delta z||^2 + ||z_t||^2$ (then, via a standard
limiting procedure obtain our estimate for generalized solutions as well);
this estimation will produce our functional $\Psi$ in Theorem \ref{psi}. Our
main tool in estimating $E_z(t)$ will be the use of two multipliers: $%
f_1 z$ and $h \cdot \nabla \big((f_2)z)\big)$, where $h$ will be a suitably
chosen $C^2$ vector field and $f_i$ are appropriate localization functions.

First, we perform multiplier analysis as generally as possible, without
imposing boundary conditions. Later on, we shall use boundary conditions
(either clamped, or hinged or free) in order to obtain the smoothness
inequality in Theorem \ref{psi}.

After establishing the existence of the attractor, we proceed to show that
it has additional regularity than that of the state space, and also that it
has finite fractal dimension. The ultimate goal is to prove a
``quasistability" estimate for the difference of general trajectories and
apply abstract Theorem \ref{t:FD}, however doing so directly in this case is
difficult. Instead, we will prove a modified quasistability estimate (via
similar methods in the asymptotic smoothness calculation) which applies to
the difference of two terms along the same trajectory (i.e. the difference
of $u(t+h)-u(t)$). This term is substantially easier to analyze, since we
have continuity in $t$ in $\mathcal{H}$ and both $u(t+h)$ and $u(t)$
converge to the \textit{same} point of equilibrium for $t \to \infty$ and $t
\to -\infty$. Our modified quasistability estimate hinges upon the bounding
terms being quadratic, so upon division by $h$ and taking $h \searrow 0$ we
can show additional regularity of elements from the attractor for
sufficiently negative times $T<<0$. Proving this will depend upon a
trajectory being `close' to a point of equilibrium, and hence yielding
`smallness' of the velocity of the solution. Propagating this regularity
forward via the dynamical systems property will then allow us to show the
additional regularity of the attractor. We will then proceed in a standard
fashion to show that this additional regularity of the attractor yields the
true and sought after quasistability estimate in Theorem \ref{t:FD}, which
will produce the finite fractal dimension of the attractor.

\section{Asymptotic Smoothness}

In this section we prove that the dynamical system generated by (\ref{plate}%
) is asymptotically smooth. We will refrain from imposing boundary
conditions until absolutely necessary in the hope of unifying the treatment
of (C), (H), and (F).

Note that the new variable $z=u-w$, where $%
(u(t),u_{t}(t))=S_{t}(u_{0},u_{1}) $, $(w(t),w_{t}(t))=S_{t}(w_{0},w_{1})$
are solutions to (\ref{plate}) with initial data taken in bounded set in $%
B\subset \mathcal{H}$. On the strength of Lemma 1.1 and (\ref{lowerb}) we
may assume that there exists $R>0$ such that
\begin{equation}
||(u(t),u_{t}(t)||_{\mathcal{H}}\leq R,~~||(w(t),w_{t}(t)||_{\mathcal{H}%
}\leq R,\text{ \ }t>0  \label{R}
\end{equation}%
The difference of two trajectories $z=u-w$ solves the following PDE:
\begin{align}
z_{tt}+\Delta ^{2}z+& \mathcal{G}(z)+\mathcal{F}(z)=0~\text{ in }~Q,
\label{diff} \\
z(0)=u_{0}-& w_{0};~z_{t}(0)=u_{1}-w_{1}  \notag
\end{align}%
where
\begin{equation*}
\mathcal{F}(z)\equiv -(f_{V}(u)-f_{V}(w)),~and~\mathcal{G}(z)\equiv d(%
\mathbf{x})(g(u_{t})-g(w_{t}))
\end{equation*}%
The above evolution is equipped with appropriate boundary conditions (C),
(H), or (F) which will be specified later.

\subsection{Multipliers}

\indent Ultimately, we will need a pointwise bound (in time) on the
functional $E_z(t)$ as defined above. To achieve this bound, we will employ
multiplier methods based on specially chosen cut-off functions $\lambda$ and
$\mu$. These functions are taken to be $C^{\infty}(\Omega)$. Later, we will
choose the supports of the derivatives of $\lambda$ and $\mu$ to be
contained in the damping region $\omega$, where the damping $g(u_t)$ is
effectively localized; the cut-off functions will be chosen in this way so
as to reconstruct the full energy $E_z(t)$ via the multipliers, bounded in
terms of the damping. However, for now, we can consider $\text{supp}~\lambda
\subset \Omega$ to be arbitrary.

We define the variables $\phi =\lambda z$ and $\psi =\mu z$. The use of the
cut-off functions will produce commutators active in the regions of $\omega $
where the cut-off functions are non-constant. Lastly, we will make use of
the following notational conventions. First, to describe (a) lower order
terms:
\begin{equation*}
\displaystyle l.o.t.^{f}\equiv \sup_{\lbrack 0,T]}||f(t)||_{2-\eta }^{2},~~~%
\displaystyle l.o.t._{1}^{f}\equiv \sup_{\lbrack 0,T]}||f(t)||_{2-\eta },
\end{equation*}%
where $0<\eta <1/2,$ and (b) boundary terms: $\displaystyle B.T.^{f}=\Big\{%
\Delta f\partial _{\nu }f-\partial _{\nu }(\Delta f)f\Big\}$

\begin{remark}
We note that the use of different notations for lower order terms is
necessary in the handling of dissipation estimates. Specifically, we must
treat the dissipation terms differently when dealing with asymptotic
smoothness type estimates, and the estimates which will ultimately yield the
quasistability estimate.
\end{remark}

\subsubsection{$\protect\phi$ Multiplier}

Let $P$ and $Q$ be two differential operators. We will make use of the
commutator symbol given by
\begin{equation*}
\lbrack P,Q]f=P(Qf)-Q(Pf),
\end{equation*}%
We shall work with smooth solutions guaranteed by Theorem \ref{wellp}.
Multiplying the PDE in (\ref{diff}) by $\lambda $ we arrive at
\begin{equation*}
\phi _{tt}+\Delta ^{2}\phi +\lambda \mathcal{G}(z)+\lambda \mathcal{F}%
(z)=[\Delta ^{2},\lambda ]z.
\end{equation*}%
Now, we employ the multiplier $\phi $. This is an equipartition multiplier
which allows us to reconstruct the difference between the potential and
kinetic energies. The following Green's identities are available \cite%
{lagnese} for sufficiently smooth functions $z$ and $\phi $:
\begin{equation*}
\begin{cases}
\displaystyle\int_{\Omega }\Delta ^{2}z\phi =\int_{\Omega }\Delta z\Delta
\phi +\int_{\Gamma }(\partial _{\nu }\Delta z\phi -\Delta z\partial _{\nu
}\phi ),~\text{~ clamped and hinged B.C } &  \\[0.3cm]
\displaystyle\int_{\Omega }\Delta ^{2}z\phi =a(z,\phi )+\beta \int_{\Gamma
_{1}}z^{3}\phi +\int_{\Gamma _{1}}\big(\mathcal{B}_{2}z\phi -\mathcal{B}%
_{1}z\partial _{\nu }\phi \big), & \text{~ free B.C }%
\end{cases}%
\end{equation*}%
Using the first formula for clamped or hinged boundary conditions yields:
\begin{equation}
\int_{Q}\big\{|\Delta \phi |^{2}-|\phi _{t}|^{2}\big\}=~\int_{Q}[\Delta
^{2},\lambda ]z\phi -\int_{Q}\lambda \big\{\mathcal{G}(z)+\mathcal{F}(z)%
\big\}\phi +\int_{\Sigma }\big\{\Delta \phi \partial _{\nu }\phi -\partial
_{\nu }(\Delta \phi )\phi \big\}-(\phi _{t},\phi )\big|_{0}^{T}  \label{3.3}
\end{equation}%
Making use of standard splitting and Sobolev embeddings, we arrive at
\begin{equation}
\int_{0}^{T}\big\{||\Delta \phi ||^{2}-||\phi _{t}||^{2}\big\}\leq
~\int_{\Sigma }B.T.^{\phi }+\int_{Q}([\Delta ^{2},\lambda ]z)\phi
+\int_{Q}\lambda \big\{\mathcal{G}(z)+\mathcal{F}(z)\big\}\phi +C(E(T)+E(0))
\tag{3.4}  \label{3.4}
\end{equation}%
In the case of free boundary conditions, the equipartition of energy takes
the form
\begin{equation*}
\int_{0}^{T}\big\{a(\phi ,\phi )+\beta |\phi |_{L_{4}(\Gamma )}^{4}-||\phi
_{t}||^{2}\big\} \leq \int_{\Sigma _{1}}(\mathcal{B}_{1}\phi \phi -\mathcal{B%
}_{2}\phi \partial _{\nu }\phi )+\int_{Q}([\Delta ^{2},\lambda ]z)\phi
\end{equation*}
\begin{equation}
+\int_{Q}\lambda \big\{\mathcal{G}(z)+\mathcal{F}(z)\big\}\phi +C(E(T)+E(0))
\tag{3.5}  \label{3.5}
\end{equation}
We note for all boundary conditions (C), (H), the boundary terms $B.T.^{\phi
}\equiv 0$. In the free case (F) we have $\mathcal{B}_{1}\phi =0,~\mathcal{B}%
_{2}\phi =2\beta \phi uw$ where the latter term contributes a lower order
term to the estimate.

To continue with our observability estimation, we must explicitly bound
the commutator
$\displaystyle\int_{Q}[\Delta ^{2},\lambda ]z \phi.$ Purely algebraic
calculations give
\begin{align}
\lbrack \Delta ^{2},\lambda ]f=&\Delta ^{2}(\lambda f) -\lambda \Delta ^{2}f
\notag \\
=&(\Delta ^{2}\lambda) f +2\Delta \lambda \Delta f +2\big(\nabla \lambda,
\nabla (\Delta f)\big) +2\big( \nabla (\Delta \lambda),\nabla f) +2\Delta
(\nabla \lambda \nabla f)  \tag{3.6}
\end{align}
The calculation above implies that the commutator $[\Delta ^{2},\lambda ] $
is a differential operator of order three. In order to exploit this in the
calculations with the energy, we need to reduce the order of differential
operator acting on a solution via integration by parts. This is done below.

This computation makes sole use of Green's theorem. For the sake of
exposition, we do not impose any boundary conditions:
\begin{equation}  \label{sym1}
\int_{\Omega }\nabla\Delta u\big(\phi\nabla \lambda)=-\int_{\Omega }(\Delta
u)\text{div}(\phi \nabla \lambda)+\int_{\Gamma }(\phi\Delta u)\nabla \lambda
\cdot \nu  \tag{3.7}
\end{equation}

\begin{equation}  \label{sym2}
\int_{\Omega }\Delta (\nabla \lambda \nabla u)\phi =-\int_{\Omega }\nabla
(\nabla \lambda \nabla u)\nabla \phi +\int_{\Gamma }\partial _{\nu }(\nabla
u\nabla \lambda )\phi  \tag{3.8}
\end{equation}



Note that due to the fact that the support of $\nabla \lambda $ is away from
the boundary, all of the boundary terms in the above expressions (\ref{sym1}%
) and (\ref{sym2}) will vanish. Moreover,
\begin{equation}  \label{commest}
\Big|\int_{\Omega }\nabla \lambda \nabla \Delta u\phi \Big|+\Big|%
\int_{\Omega }\Delta (\nabla \lambda \nabla u)\phi \Big|\leq C_{\lambda
}||u||_{2}||\phi ||_{1}  \tag{3.9}
\end{equation}
Hence to conclude our $\phi $ multiplier estimate, we have the following
technical lemma:

\begin{lemma}[Preliminary $\phi $ Estimate]
\label{phiest}Let $\phi \equiv \lambda z$, as
defined above, where $z$ solves(\ref{diff}) with boundary conditions (C) or
(H). Then, there exists $0 < C < \infty $ such that
\begin{equation}
\int_{0}^{T}\big\{||\Delta \phi ||^{2}-||\phi _{t}||^{2}\big\} \leq
~C(T,\lambda )l.o.t.^{z}+\int_{Q}\lambda \big\{\mathcal{G}(z)+\mathcal{F}(z)%
\big\}\phi +C(E_{z}(T)+E_{z}(0))  \tag{3.10}
\end{equation}%
In the free case (F)
\begin{equation}
\int_{0}^{T}\big\{a(\phi ,\phi )+\beta \int_{\Gamma }\phi ^{4}-||\phi
_{t}||^{2}\big\}\leq ~C(T,\lambda , R)l.o.t.^{z}+\int_{Q}\lambda \big\{%
\mathcal{G}(z)+\mathcal{F}(z)\big\}\phi +C(E_{z}(T)+E_{z}(0))  \tag{3.11}
\end{equation}
\end{lemma}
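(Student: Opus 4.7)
The plan is to combine three ingredients already developed in the excerpt: the equipartition identity \eqref{3.4} (or \eqref{3.5} in the free case), the explicit decomposition of the commutator $[\Delta^{2},\lambda]$ given in (3.6), and the integration-by-parts identities \eqref{sym1}--\eqref{sym2} together with the resulting commutator estimate \eqref{commest}. Once these are in hand, the only remaining work in Lemma~\ref{phiest} is to control the commutator integral $I\equiv\int_{Q}[\Delta^{2},\lambda]z\,\phi$ by $C(T,\lambda)\,l.o.t.^{z}$, up to contributions that are absorbed into the endpoint term $C(E_{z}(T)+E_{z}(0))$ already present on the right-hand side of \eqref{3.4} or \eqref{3.5}.

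First I would expand $I$ via (3.6) into five pieces. The zeroth- and first-order-in-$z$ pieces, $(\Delta^{2}\lambda)z\,\phi$ and $2(\nabla(\Delta\lambda),\nabla z)\,\phi$, are controlled in an elementary way by $C_{\lambda}\|z\|_{1}\|\phi\|_{0}$. The second-order piece $2\Delta\lambda\,\Delta z\,\phi$ is handled using the a priori trajectory bound \eqref{R}: inserting $\|\Delta z(t)\|\le 2R$ produces a bound of the form $C_{\lambda}(R)\|\phi\|_{0}$. For the two third-order pieces, $2(\nabla\lambda,\nabla\Delta z)\,\phi$ and $2\Delta(\nabla\lambda\cdot\nabla z)\,\phi$, I would apply precisely \eqref{sym1}--\eqref{sym2}; the key point, already emphasized in the excerpt, is that $\nabla\lambda$ has compact support inside $\Omega$, so the surface contributions in these two identities vanish identically, and \eqref{commest} then yields a pointwise-in-time bound of the form $C_{\lambda}\|z\|_{2}\|\phi\|_{1}\le C_{\lambda}(R)\|z\|_{1}$ after a further use of \eqref{R} combined with $\|\phi\|_{1}\le C_{\lambda}\|z\|_{1}$.

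Once every commutator piece has been controlled pointwise in time by $C_{\lambda}(R)\|z\|_{1}$, I would integrate over $(0,T)$, interpolate $\|z\|_{1}\le C_{\eta}\|z\|_{2-\eta}$ for a fixed $\eta\in(0,1/2)$, and apply Young's inequality to pass to the squared quantity $l.o.t.^{z}=\sup_{[0,T]}\|z\|_{2-\eta}^{2}$, absorbing time factors into $C(T,\lambda)$. The resulting bound $|I|\le C(T,\lambda,R)\,l.o.t.^{z}$ inserts directly into \eqref{3.4} to give (3.10) in the clamped and hinged cases, for which the surface integrand $B.T.^{\phi}$ in \eqref{3.4} is identically zero.

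For the free case I would start from \eqref{3.5} and repeat the commutator analysis verbatim, since it is insensitive to the boundary conditions. The only additional ingredient is the surface contribution on $\Gamma_{1}$ from $\mathcal{B}_{1}\phi$ and $\mathcal{B}_{2}\phi$: as noted in the excerpt, $\mathcal{B}_{1}\phi\equiv 0$, while the cubic term in $\mathcal{B}_{2}$, after differencing $u$ and $w$ via $u^{3}-w^{3}=(u-w)(u^{2}+uw+w^{2})$, produces a quadratic boundary expression of the schematic form $\beta\,\phi\,uw$ on $\Gamma_{1}$. Using \eqref{R} together with the Sobolev trace embedding $H^{2-\eta}(\Omega)\hookrightarrow L_{4}(\Gamma)$, this is dominated by $C(R)\,l.o.t.^{z}$ and folded into $C(T,\lambda,R)\,l.o.t.^{z}$, yielding (3.11). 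The main technical subtlety of the entire argument is precisely the compact-support property of $\nabla\lambda$ exploited in \eqref{sym1}--\eqref{sym2}: without it, the third-order commutator pieces would produce uncontrolled boundary traces of $\Delta z$ and $\partial_{\nu}\Delta z$, for which no bound in terms of $l.o.t.^{z}$ (nor even of $E_{z}$) would be available.
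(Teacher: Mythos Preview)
Your architecture is right and matches the paper: feed the commutator decomposition (3.6) and the integration-by-parts identities \eqref{sym1}--\eqref{sym2} into the equipartition identity \eqref{3.4}/\eqref{3.5}, using $\mathrm{supp}\,\nabla\lambda\subset\subset\Omega$ to kill all boundary traces arising from the commutator. The free-case boundary contribution is also handled exactly as you say.

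There is, however, a genuine gap in how you pass from your pointwise commutator bound to $l.o.t.^{z}$. By invoking the a~priori bound $\|\Delta z\|\le 2R$ on the second-order piece, and $\|z\|_{2}\le 2R$ on top of \eqref{commest} for the third-order pieces, you end up with a bound that is \emph{linear} in $z$: $|I(t)|\le C_{\lambda}(R)\|z(t)\|_{1}$. Your subsequent appeal to Young's inequality cannot convert this to the \emph{quadratic} quantity $l.o.t.^{z}=\sup_{[0,T]}\|z\|_{2-\eta}^{2}$: Young applied to $C_{\lambda}(R)\cdot\|z\|_{2-\eta}$ yields $\tfrac12 C_{\lambda}(R)^{2}+\tfrac12\|z\|_{2-\eta}^{2}$, and after time integration the additive constant $\tfrac{T}{2}C_{\lambda}(R)^{2}$ has no place in (3.10). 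Note that (3.10) carries $C(T,\lambda)$ with \emph{no} $R$-dependence, and that the quadratic structure of $l.o.t.^{z}$ is not cosmetic---it is precisely what is required downstream for the quasistability estimate (cf.\ the remark after Theorem~\ref{t:FD} and Proposition~\ref{p:1}).

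The fix is to avoid the a~priori bound entirely in the commutator step and keep both factors as $z$-norms. Since $[\Delta^{2},\lambda]$ is third order with coefficients supported away from $\Gamma$ and $\phi=\lambda z$, the pairing $\int_{\Omega}[\Delta^{2},\lambda]z\,\phi$ is a compactly supported bilinear form in $z$ of total differential order three. One further integration by parts beyond \eqref{sym1}--\eqref{sym2}---for instance, in $\int_{\Omega}\Delta z\,(\nabla\lambda\cdot\nabla\phi)$ move one more derivative off $\Delta z$ and use the identity $\int a\,(\partial_{i}z)(\partial_{ij}z)=-\tfrac12\int(\partial_{j}a)(\partial_{i}z)^{2}$---yields directly $|I(t)|\le C_{\lambda}\|z(t)\|_{1}^{2}\le C_{\lambda}\|z(t)\|_{2-\eta}^{2}$, hence $\int_{0}^{T}|I|\le C(T,\lambda)\,l.o.t.^{z}$ with the correct structure. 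The paper's proof compresses this passage into a single line after \eqref{commest}, but it is this quadratic control (not the linear one you obtain) that is being claimed.
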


\begin{proof}
Taking into account (\ref{commest}) in (\ref{3.4}), we have
\begin{align}
\int_0^T \big\{||\Delta \phi||^2-||\phi_t||^2\big\} \le&
~C(T,\lambda)l.o.t.^z +\int_Q\lambda\big\{ \cG+\cF\big\}\phi +
\int_{\Sigma} BT^{\phi}
 \nonumber \\
&~+\int_{\Sigma}\Big\{\Dn(\Delta(\lambda z))(\lambda
z)-\Delta(\lambda z)\Dn(\lambda z)-\Dn(\Delta z)\lambda^2z
\nonumber \\&~+2\lambda z(\Delta z)\Dn z+\lambda^2(\Delta z)\Dn z
\Big\} +C(E(T)+E(0)) \nonumber \hspace{3cm}(3.12)
\end{align}
Taking into consideration  boundary conditions (C) or (H) in (\ref{3.4}), noting that $ B.T^{\phi} =0 $ and accounting for the fact that
the  boundary  terms  resulting from the commutators vanish
  leads  to the first statement in the Lemma.
 Calculations in the free case are analogous, and result from
 (\ref{3.5}) and    $ \mathcal{B}_1 \phi =0 ,~ \mathcal{B}_2 \phi
=  2\beta \phi u w $,
where the latter term contributes a lower order term to the estimate:
$$\Big|\int_{\Gamma_1} \mathcal{B}_2 \phi \phi \Big|
\leq  2 \beta \int_{\Gamma_1} |\phi|^2 |u||w|
\leq   2\beta R^2 ||\phi||^2_1  \leq C(R)  l.o.t^z $$

\end{proof}

\subsubsection{Multiplier 2: $h\cdot \protect\nabla \protect\psi$}

For the first part of this section, we specify only that $\text{supp}~\mu
\cap \Gamma =\emptyset $; otherwise, we keep $\mu $ as general as possible,
specifying it at the last possible moment. Additionally, define a set $%
M\equiv \text{supp}~\nabla \mu =\overline{\{x\in \Omega \big|\mu \not\equiv
\text{constant}\}}$. Now, if we multiply (\ref{diff}) by $\mu ,$ and recall
that $\psi \equiv \mu z$, we obtain%
\begin{equation*}
\psi _{tt}+\Delta ^{2}\psi +\mu \mathcal{G}(z)+\mu \mathcal{F}(z)=[\Delta
^{2},\mu ]z
\end{equation*}%
where $\mathcal{G}(z)=d(\mathbf{x})\left( g(u_{t})-g(w_{t})\right) $ and $%
\mathcal{F}(z)=-(f_{V}(u)-f_{V}(w))$, as before. We now make use of the
multiplier $h\cdot \nabla \psi $, which we write as $h\nabla \psi $
henceforth; there are various choices for the vector field $h$,
situationally dependent, however here we need only take $h=\mathbf{x}-%
\mathbf{x}_{0}\in \mathbb{R}^{2}$ in order to obtain control on the
potential energy of the plate. Now, as in the previous section, we multiply
the last equality by our multiplier and use Green's Theorem to obtain
\begin{equation*}
\int_{Q}\left( |\psi _{t}|^{2}+|\Delta \psi |^{2}\right) \leq
~C(E_{z}(T)+E_{z}(0))+\int_{Q}\mu \mathcal{G}(z)(h\nabla \psi )+\int_{Q}\mu
\mathcal{F}(z)(h\nabla \psi )+\int_{Q}[\Delta ^{2},\mu ]z(h\nabla \psi )
\end{equation*}%
By explicitly writing out the commuator, and taking into account the support
of $\nabla \mu $, upon splitting we obtain:
\begin{equation}
\int_{Q}[\Delta ^{2},\mu ]z(h\nabla \psi )=~\int_{0}^{T}\int_{M}[\Delta
^{2},\mu ]z(h\nabla \psi )\leq C(\mu )\int_{0}^{T}\int_{M}|\Delta
z|^{2}+C(T,\mu )l.o.t.^{z}  \tag{3.13}
\end{equation}

\noindent Now, at this point we specify the specific structure of the
supports for $\lambda$ and $\mu$ (which up to now have been general).
The following picture illustrates our choice for these supports and their
relationship to the damping region $\omega$:

\begin{center}
\includegraphics[scale=1]{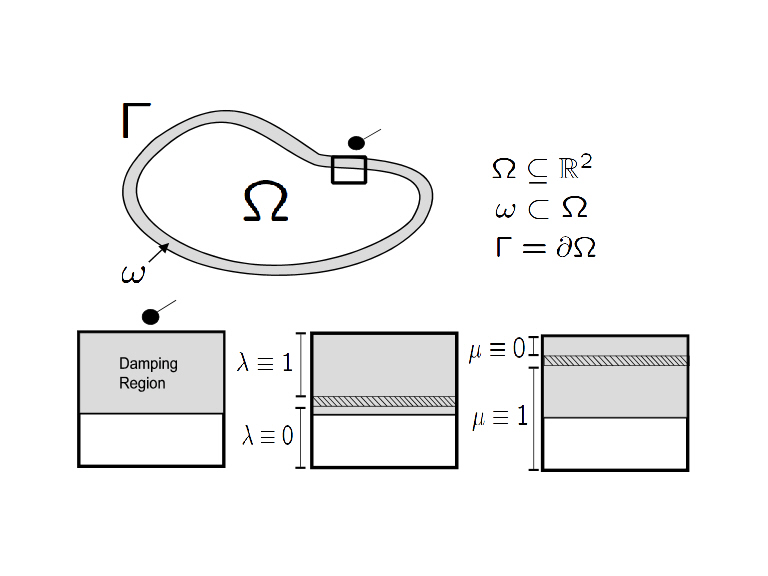}
\end{center}

We emphasize that (a) the set $M\subset \text{supp}~\lambda $ and (b) $\text{%
supp}~\lambda $ and $\text{supp}~\mu $ overlap inside the damping region $%
\omega $ and that $\text{supp}~\lambda \cup \text{supp}(\mu )=\Omega $.
Since $M\subset \{x\in \Omega :\lambda (x)\equiv 1\}$, we have the following
inequality:

\begin{align}
\int_{Q}[\Delta ^{2},\mu ]z(h\nabla \psi )\leq &~C(\mu
)\int_{0}^{T}\int_{M}|\Delta z|^{2}+l.o.t.^{z}  \notag \\
\leq &~C(\mu )\int_{0}^{T}\int_{\lambda \equiv 1}|\Delta z|^{2}+C(\mu
,T)l.o.t.^{z}  \notag \\
\leq &~C(\mu )\int_{Q}|\Delta \phi |^{2}+C(\mu ,T)l.o.t.^{z}  \tag{3.14}
\end{align}

\subsection{Energy Recovery Estimate}

We may now appeal to our calculation with the $\phi$ multiplier previously,
to obtain our preliminary $\psi$ estimate:

\begin{lemma}[Preliminary $\psi $ Estimate]
\label{psiest} Let $\psi \equiv \mu z$, as
defined above, where $z$ solves (\ref{diff}) with any boundary conditions
under considerations. Moreover, assume $\text{supp}~(\mu )$ is bounded away
from $\Gamma $. Then, in the case of clamped (C) or hinged (H) boundary
conditions we have
\begin{equation*}
\int_{Q}\left( |\psi _{t}|^{2}+|\Delta \psi |^{2}\right) \leq C(\mu ,\lambda
)\Big\{(E_{z}(T)+E_{z}(0))+\int_{Q}\mu \big\{\mathcal{G}(z)+\mathcal{F}(z)%
\big\}(h\nabla \psi )
\end{equation*}
\begin{equation*}
+\int_{Q}\lambda \big\{\mathcal{G}(z)+\mathcal{F}(z)\big\}\phi
+C(T)l.o.t.^{z}  \tag{3.15}  \label{3.15}
\end{equation*}
In the case of free boundary conditions (F)
\begin{equation*}
\int_{0}^{T}\left( ||\psi _{t}||^{2}+a(\psi ,\psi )\right) \leq ~C(\mu
,\lambda )\Big\{(E_{z}(T)+E_{z}(0))+\int_{Q}\mu \big\{\mathcal{G}(z)+%
\mathcal{F}(z)\big\}(h\nabla \psi )
\end{equation*}%
\begin{equation}
+\int_{Q}\lambda \big\{\mathcal{G}(z)+\mathcal{F}(z)\big\}\phi
+C(T)l.o.t.^{z}\Big\}  \tag{3.16}
\end{equation}
\end{lemma}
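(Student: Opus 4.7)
The proof will essentially splice the multiplier computation already carried out immediately above the lemma together with the preliminary $\phi$-estimate from Lemma \ref{phiest}, and check that all boundary contributions drop out thanks to the hypothesis $\mathrm{supp}(\mu)\cap\Gamma=\emptyset$. The plan is to take the inequality displayed just before the lemma (the one with $|\psi_t|^2+|\Delta\psi|^2$ on the left and $[\Delta^2,\mu]z(h\nabla\psi)$ on the right), insert the commutator estimate (3.14), and then plug in (3.10) of Lemma \ref{phiest} to control $\int_Q|\Delta\phi|^2$ in terms of quantities that appear in the right hand side of the claimed inequality.

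More concretely, the first step is to verify that in the Rellich-type identity for $\Delta^2\psi$ against $h\cdot\nabla\psi$, and the analogous time identity for $\psi_{tt}$, all boundary integrals over $\Sigma$ vanish because $\psi$, $\nabla\psi$, $\Delta\psi$ and $\partial_\nu\Delta\psi$ are all identically zero on $\Gamma$; with $h=\mathbf{x}-\mathbf{x}_0$ and $\mathrm{div}\,h=2$ this produces precisely $\int_Q(|\psi_t|^2+|\Delta\psi|^2)$ on the left, modulo endpoint terms absorbed into $C(E_z(T)+E_z(0))$. The second step substitutes (3.14) for the commutator, giving a term $C(\mu)\int_Q|\Delta\phi|^2+C(T,\mu)\,l.o.t.^z$. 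The third step invokes Lemma \ref{phiest}: rewriting (3.10) as
\begin{equation*}
\int_Q|\Delta\phi|^2\leq \int_Q|\phi_t|^2+C(T,\lambda)\,l.o.t.^z+\int_Q\lambda\bigl\{\mathcal{G}(z)+\mathcal{F}(z)\bigr\}\phi+C(E_z(T)+E_z(0)),
\end{equation*}
and combining with the previous step yields the right hand side of (3.15), provided the residual $\int_Q|\phi_t|^2$ is absorbed.

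For the free boundary case (F) two extra checks are needed. First, on the left hand side $\|\Delta\psi\|^2$ is replaced by $a(\psi,\psi)$; since $\mu$ (and hence $\psi$) vanishes near $\Gamma$, the trace contribution $\mu_1\int_{\Gamma_1}\psi^2$ is zero and $a(\psi,\psi)$ reduces to $\int_\Omega\widetilde{a}(\psi,\psi)$, which is equivalent to $\|\Delta\psi\|^2$ up to topological constants. Second, the boundary integrals $\int_{\Sigma_1}(\mathcal{B}_1\psi\cdot\psi-\mathcal{B}_2\psi\cdot\partial_\nu\psi)$ and the Rellich boundary term from the multiplier $h\cdot\nabla\psi$ all vanish for the same reason, so the free case collapses to the same algebra as (C)/(H). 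The coupling into (3.11) of Lemma \ref{phiest} is then identical.

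The main technical obstacle is handling the equipartition defect $\int_Q|\phi_t|^2=\int_Q\lambda^2|z_t|^2$ that remains after applying Lemma \ref{phiest}. Since $z$ is the difference of two trajectories, no monotone energy identity controls $\int_0^T\|z_t\|^2$ directly by $E_z(T)+E_z(0)$. My plan is to localize this term to the damping region: because $\mathrm{supp}(\lambda)\subset\omega$ where $d(\mathbf{x})\geq\alpha_0>0$, Assumption \ref{g} supplies the lower bound $(g(u_t)-g(w_t))(u_t-w_t)\geq m|z_t|^2$ for $|u_t|,|w_t|\geq 1$, so on $\{\lambda>0\}$ one has, modulo a bounded term,
\begin{equation*}
\int_0^T\!\!\int_{\mathrm{supp}\,\lambda}|z_t|^2\leq C\int_0^T\!\!\int_\Omega d(\mathbf{x})\mathcal{G}(z)\,z_t+C(T)\,l.o.t.^z,
\end{equation*}
which is exactly the sort of damping integral already recorded on the right hand side of (3.15) (after rewriting $\int_Q\lambda\mathcal{G}(z)\phi$ together with an analogous $z_t$-contribution via integration by parts in time, or by identifying $\int_Q\lambda\mathcal{G}(z)z_t$ as a Lyapunov contribution). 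Once this absorption is made, collecting constants gives (3.15) in cases (C)/(H) and (3.16) in case (F), with $T$-dependence isolated in the lower order term $C(T)\,l.o.t.^z$ as claimed.
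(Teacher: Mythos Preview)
Your first three steps are exactly what the paper does: the Rellich identity for $h\cdot\nabla\psi$ (all boundary terms dropping because $\mathrm{supp}\,\mu$ is interior), then the commutator bound (3.14), then rewriting (3.10) to control $\int_Q|\Delta\phi|^2$. You have also correctly isolated the real leftover, the kinetic term $\int_Q|\phi_t|^2=\int_Q\lambda^2|z_t|^2$.

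The gap is in your final paragraph, where you try to absorb $\int_Q|\phi_t|^2$ into the right-hand side of (3.15). Two things fail. First, the right-hand side of (3.15) simply does not contain a term of the form $\int_Q\mathcal{G}(z)z_t$; it contains $\int_Q\lambda\mathcal{G}(z)\phi$ and $\int_Q\mu\mathcal{G}(z)(h\nabla\psi)$, neither of which is a dissipation integral in the velocity, and no ``integration by parts in time'' turns one into the other. Second, even granting the target, the pointwise bound you invoke from Assumption~\ref{g} only gives $|z_t|^2\le \delta + C_\delta\big(g(u_t)-g(w_t)\big)z_t$ (this is exactly how the paper writes it just after (3.19)); the residual is an additive constant $\delta T|\Omega|$, not something of the form $C(T)\,l.o.t.^z=\sup_{[0,T]}\|z\|_{2-\eta}^2$. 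So the absorption you propose produces a term that does not fit on the stated right-hand side.

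In the paper this kinetic remainder is \emph{not} absorbed inside Lemma~\ref{psiest} at all. If you look at the very next display, (3.17), the term $\int_0^T\|\phi_t\|^2$ sits explicitly on the right; it is then replaced by $\int_0^T\int_\omega|z_t|^2$ (using $\mathrm{supp}\,\lambda\subset\omega$ and $\lambda\le 1$) and carried as a separate term into the Preliminary Energy Estimate (Lemma~\ref{l:rec}, inequality (3.18)). Only \emph{after} Lemma~\ref{l:rec} is this damping-region kinetic term estimated via the monotonicity of $g$, and there the additive $\delta$ is acceptable because the argument is heading toward Theorem~\ref{psi}. In short: stop after your Step~3, leave $\int_Q|\phi_t|^2$ on the right (the printed (3.15) has evidently dropped it; compare with (3.17)), and defer its treatment to the next lemma.
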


We note that in the nonlinear boundary term associated with the operator $%
\mathcal{B}_{2}$ vanishes due to the fact that the support of $\mu $ is away
from the boundary.

We may now combine the estimates from Lemma \ref{phiest} and Lemma \ref%
{psiest} to obtain an estimate on the total energy (with either form of
boundary conditions (C) or (H) or (F)):
\begin{equation*}
\int_{0}^{T}\big\{||\psi _{t}||^{2}+||\phi _{t}||^{2}+ a(\phi, \phi ) + a
(\psi, \psi ) +\beta \int_{\Gamma_1} |\phi|^4 \big\}\leq ~C(\mu ,\lambda )%
\Big\{(E_{z}(T)+E_{z}(0))+\int_{Q}\mu \big\{\mathcal{G}(z)+\mathcal{F}(z)%
\big\}(h\nabla \psi )
\end{equation*}%
\begin{equation*}
+\int_{Q}\lambda \big\{\mathcal{G}(z)+\mathcal{F}(z)\big\}%
\phi+\int_{0}^{T}||\phi _{t}||^{2}+C(T,R)l.o.t.^{z}\Big\}  \tag{3.17}
\end{equation*}

\noindent By our choice of supports for $\mu $ and $\lambda $ we note that
the LHS of the above equation \textit{overestimates} the total energy $%
E_{z}(t)$. On the RHS of the estimate we have the term $\int_{Q}|\phi
_{t}|^{2}$, which we replace by $\int_{0}^{T}\int_{\omega }|z_{t}|^{2}$
since $\text{supp}~\lambda \subset \omega $ and on $\text{supp}~\lambda$, $%
\lambda \leq 1$, so we have that
\begin{equation*}
\int_{Q}|\phi _{t}|^{2}\leq \int_{0}^{T}\int_{\omega }|z_{t}|^{2}
\end{equation*}%
Making the appropriate changes above in Lemma \ref{phiest} and Lemma \ref%
{psiest}, we have the analogous result for the free boundary conditions (F).
Hence we can conclude

\begin{lemma}[Preliminary Energy Estimate]
\label{l:rec} For any boundary condition (C), (H), or (F) we have
\begin{equation*}
\int_{0}^{T}E_{z}(t)\leq ~C(\mu ,\lambda )\Big\{(E_{z}(T)+E_{z}(0))+\int_{Q}%
\mu \big\{\mathcal{G}(z)+\mathcal{F}(z)\big\}(h\nabla \psi )+\int_{Q}\lambda %
\big\{\mathcal{G}(z)+\mathcal{F}(z)\big\}\phi
\end{equation*}%
\begin{equation}
+\int_{0}^{T}\int_{\omega }|z_{t}|^{2}+C(T)l.o.t.^{z}  \tag{3.18}
\label{eq:318}
\end{equation}
\end{lemma}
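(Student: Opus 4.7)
The statement to prove is essentially a synthesis of Lemma~\ref{phiest} and Lemma~\ref{psiest}, with one additional localization step. My plan is as follows.

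First, I would add the preliminary $\phi$-estimate (3.10) and the preliminary $\psi$-estimate (3.15) (respectively (3.11) and (3.16) for the free case). The sum already appears essentially as the intermediate display (3.17), whose LHS carries the four quantities $\|\phi_t\|^2$, $\|\psi_t\|^2$, $a(\phi,\phi)$, $a(\psi,\psi)$ (with $a(\cdot,\cdot)$ collapsing to $\|\Delta\cdot\|^2$ under (C) and (H)), plus the boundary $L_4$ contribution on $\Gamma_1$ in case (F). The RHS is exactly the one desired in (\ref{eq:318}), except that it still contains the residual $\int_0^T\|\phi_t\|^2$ coming from Lemma~\ref{psiest}.

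Second, I would reconstruct $\int_0^T E_z(t)\,dt$ from the LHS. The key is the geometric setup in the figure: the cutoffs $\lambda,\mu\in C^\infty(\Omega)$ are chosen subordinate to the covering $\mathrm{supp}\,\lambda\cup\mathrm{supp}\,\mu=\Omega$, so one may arrange $\lambda^2(\mathbf{x})+\mu^2(\mathbf{x})\geq c_0>0$ on all of $\overline\Omega$. This gives at once
\[
\|\phi_t\|^2+\|\psi_t\|^2=\int_\Omega(\lambda^2+\mu^2)|z_t|^2\geq c_0\|z_t\|^2.
\]
For the potential part, I would expand $\Delta\phi=\lambda\Delta z+2\nabla\lambda\cdot\nabla z+(\Delta\lambda)z$ and analogously for $\Delta\psi$; the leading square reproduces $c_0\|\Delta z\|^2$, while every cross term involves at most one derivative of $z$ and is absorbed into $C(\lambda,\mu)\,l.o.t.^z$ via the embedding $H^{2-\eta}\hookrightarrow H^1$. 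In case (F), the same scheme applies with $a(\phi,\phi)+a(\psi,\psi)$ in place of $\|\Delta\phi\|^2+\|\Delta\psi\|^2$, using the coercivity of $a(\cdot,\cdot)$ on $\mathscr{D}(A^{1/2})$; the $\beta\int_{\Gamma_1}|\phi|^4$ term is discarded (it is nonnegative).

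Third, I would dispatch the residual $\int_0^T\|\phi_t\|^2$ term on the RHS of (3.17). Since $\mathrm{supp}\,\lambda\subset\omega$ and $0\leq\lambda\leq 1$,
\[
\int_0^T\|\phi_t\|^2\,dt=\int_0^T\!\!\int_\Omega\lambda^2|z_t|^2\leq\int_0^T\!\!\int_\omega|z_t|^2,
\]
which is exactly the damping observability term appearing in (\ref{eq:318}). Assembling the three steps and renaming constants (which now depend on $\lambda$, $\mu$, and, in the free case, on $R$ via the boundary lower-order term from Lemma~\ref{phiest}) yields the claimed inequality for each of (C), (H), (F).

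The only nontrivial point I anticipate is the coverage inequality $\lambda^2+\mu^2\geq c_0>0$ on $\Omega$: the mere fact that $\mathrm{supp}\,\lambda\cup\mathrm{supp}\,\mu=\Omega$ does not suffice, since both cutoffs could simultaneously vanish to high order in their overlap. The fix is to take $\mu\equiv 1$ outside a neighborhood of $\omega$ and $\lambda\equiv 1$ on a smaller set containing $M=\mathrm{supp}\,\nabla\mu$, choosing the transitions so that the two nonnegative functions $\lambda,\mu$ never both vanish --- a standard quasi-partition-of-unity construction compatible with the schematic in Section~3.1.2. Once this is arranged, everything else is routine algebra and integration by parts; no new estimates beyond those already recorded in Lemmas~\ref{phiest} and~\ref{psiest} are needed.
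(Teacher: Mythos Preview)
Your proposal is correct and follows essentially the same route as the paper: add the $\phi$- and $\psi$-estimates to obtain (3.17), observe that the left-hand side dominates $\int_0^T E_z$, and replace $\int_0^T\|\phi_t\|^2$ by $\int_0^T\int_\omega|z_t|^2$ using $\mathrm{supp}\,\lambda\subset\omega$. The paper is terser on the reconstruction step---it simply asserts that the left side ``overestimates'' $E_z$ by the choice of supports (implicitly relying on $\{\lambda\equiv 1\}\cup\{\mu\equiv 1\}=\Omega$ from the figure, so that $\|\Delta\phi\|^2+\|\Delta\psi\|^2\geq\|\Delta z\|^2$ without any commutator bookkeeping)---whereas you expand $\Delta(\lambda z)$ and push cross terms into $l.o.t.^z$; both arguments are valid and lead to the same inequality.
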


\begin{remark}
At this point, we impose clamped (C) or hinged (H) boundary conditions, in
order to simplify and streamline the analysis. At the end of this section,
we discuss the boundary conditions (F).
\end{remark}

\noindent If we take into account the supports of $\lambda $ and $\mu $
(dropping dependence of the constants on $\mu ,\lambda ,$ and $\Omega $)
then (\ref{eq:318}) with clamped boundary conditions becomes
\begin{equation*}
\int_{0}^{T}E_{z}(t)\leq ~C\Big\{E_{z}(T)+E_{z}(0)+\int_{Q}\big\{\mathcal{G}%
(z)+\mathcal{F}(z)\big\}(h\nabla \psi )+\int_{Q}\big\{\mathcal{G}(z)+%
\mathcal{F}(z)\big\}z \Big\}
\end{equation*}
\begin{equation}
+ C\int_{0}^{T}\int_{\omega }|z_{t}|^{2}+C(T)l.o.t.^{z}  \tag{3.19}
\label{eq:319}
\end{equation}

\begin{remark}
At this point we pause to point out that the estimate we have shown above in
(\ref{eq:319}) will be used in the sections to follow, specifically in the
quasistability estimate. In particular, we must handle the damping terms
(involving $u_t,~w_t$) differently in the estimation for asymptotic
smoothness, versus the estimation for quasistability.
\end{remark}

\noindent By the assumptions on $g$ in Assumption \ref{g}, for every $\delta
$ there exists $C_{\delta }>0$ such that
\begin{equation*}
|u_{t}-w_{t}|^{2}\leq \delta +C_{\delta }\big(g(u_{t})-g(w_{t})\big)\left(
u_{t}-w_{t}\right) .
\end{equation*}%
This gives that
\begin{equation*}
\int_{0}^{T}\int_{\omega }|z_{t}|^{2}\leq T\delta |\Omega |+C(\delta
)\int_{0}^{T}\int_{\omega }(g(u_{t})-g(w_{t}))z_{t}
\end{equation*}%
or, simplifying, and taking into account $\omega \subset \text{supp}~d$
and that $d(\mathbf{x})\geq \alpha _{0}>0$, we have
\begin{equation*}
\int_{0}^{T}\int_{\omega }|z_{t}|^{2}\leq \delta +C(\delta ,T,\Omega
)\int_{Q}\mathcal{G}(z)z_{t}
\end{equation*}%
So taking into account the last inequality in (\ref{eq:319}) , we obtain
\begin{align*}
\int_{0}^{T}E_{z}(t)\leq & ~\delta +C\Big\{E_{z}(T)+E_{z}(0)+\int_{0}^{T}%
\int_{\omega }\big(\mathcal{G}(z)+\mathcal{F}(z)\big)z \\
& ~+\int_{Q}\big(\mathcal{G}(z)+\mathcal{F}(z))h\nabla \psi +C(\delta
,T)\int_{Q}\mathcal{G}(z)z_{t}\Big\} \\
& ~+C(T)l.o.t.^{z}
\end{align*}%
where the constant $C$ does not depend on $T$. Recall, $u$ and $w$ are
solutions to (\ref{plate}) corresponding to some initial conditions $y_{1}$
and $y_{2},$ satisfying $S_{t}y_{1}=(u(t),u_{t}(t))$ and $%
S_{t}y_{2}=(w(t),w_{t}(t))$ for the evolution $S_{t}$ associated to the
plate dynamics. We can assume that $y_{1},~y_{2}\in \mathcal{W}_{R}$ for
some $R>R_{\ast }$, where the invariant set $\mathcal{W}_{R}=\{(u,v)\in
\mathcal{H},$ $\mathscr{E}(u,v)\leq R\}$ . Assuming the solutions $u$ and $w$
are strong, by the invariance of $\mathcal{W}_{R}$ we have
\begin{equation}
||u(t)||_{2}+||u_{t}(t)||+||w(t)||_{2}+||w_{t}(t)||\leq C(R),\text{ \ }t\geq
0  \tag{3.20}
\end{equation}%
\begin{equation}
||u(t)||_{C(\Omega )}+||w(t)||_{C(\Omega )}\leq C(R),\text{ \ }t\geq 0
\tag{3.21}
\end{equation}

Recent developments in the area of Hardy-Lizorkin spaces and compensated
compactness methods allow one to show the following `sharp' regularity of
the Airy stress function $v$:

\begin{theorem}[Sharp Regularity of the Airy Stress Function]
\cite{springer} \label{t:3.4}
\begin{equation*}
||v(u)|| _{W^{2,\infty }}\leq C|| u|| _{2}^{2},\text{ \ \ \ \ \ \ \ }||
v(u,w)|| _{W^{2,\infty }}\leq C|| u|| _{2}|| w|| _{2}
\end{equation*}
where we have denoted $v(u,w)\equiv -\Delta ^{-2}[ u,w]$ and $\mathscr{D}(\Delta^{2} )
= H^4(\Omega) \cap H_0^2(\Omega)$
\end{theorem}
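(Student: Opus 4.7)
The plan is to exploit the special divergence-form structure of the von Karman bracket that places $[u,w]$ in the real Hardy space $\mathcal{H}^1(\mathbb{R}^2)$ (distinct from the state space $\mathcal{H}$ of the paper), and then to combine this with $\mathcal{H}^1$--$BMO$ duality applied to the Green's function of the clamped biharmonic operator. I would begin by rewriting the bracket as a pure second-order divergence: for the diagonal case one has the algebraic identity
\[
[u,u] = 2\det D^2 u = -\partial_{xx}(u_y^2) + 2\partial_{xy}(u_x u_y) - \partial_{yy}(u_x^2),
\]
and the polarized version exhibits $[u,w]$ as a finite sum of Jacobians $J(\phi,\psi) = \phi_x \psi_y - \phi_y \psi_x$ whose arguments $\phi,\psi$ are first-order derivatives of $u$ and $w$. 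Since $u,w \in H^2(\Omega)$, these arguments lie in $H^1(\Omega)$.

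Next, I would invoke the Coifman--Lions--Meyer--Semmes compensated compactness theorem, which guarantees that a Jacobian of two $H^1(\mathbb{R}^2)$ functions sits in the real Hardy space with
\[
||J(\phi,\psi)||_{\mathcal{H}^1(\mathbb{R}^2)} \leq C \, ||\nabla \phi||_{L^2} \, ||\nabla \psi||_{L^2}.
\]
Applied after a Stein extension of $u,w$ from $\Omega$ to $\mathbb{R}^2$ preserving the $H^2$ norm, this yields $||[u,w]||_{\mathcal{H}^1} \leq C \, ||u||_2 \, ||w||_2$. For the elliptic step, the Green's function $G(x,y)$ of $\Delta^2$ on $\Omega$ with clamped boundary conditions has the classical fundamental-solution singularity $c|x-y|^2 \log|x-y|$, so its second $x$-derivatives have only a logarithmic singularity on the diagonal; consequently $y \mapsto D^2_x G(x,y)$ lies in $BMO(\Omega)$ uniformly in $x \in \Omega$. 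Writing $v(u,w)(x) = -\int_\Omega G(x,y)[u,w](y)\,dy$, differentiating twice under the integral sign, and invoking the duality then produces the pointwise bound
\[
|D^2 v(u,w)(x)| \leq C \, ||D^2_x G(x,\cdot)||_{BMO(\Omega)} \, ||[u,w]||_{\mathcal{H}^1(\Omega)} \leq C \, ||u||_2 \, ||w||_2,
\]
uniformly in $x$. The lower-order $L^\infty$ bounds on $v$ and $\nabla v$ then follow from the clamped boundary conditions $v = \partial_\nu v = 0$ on $\Gamma$ and Poincar\'e's inequality.

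The main obstacle in this program is the Hardy-space estimate: the Coifman--Lions--Meyer--Semmes theorem is non-trivial, resting on the atomic/paraproduct characterization of $\mathcal{H}^1$, and its transfer from $\mathbb{R}^2$ to the bounded domain $\Omega$ demands careful handling of the boundary via an $H^2$-preserving extension of $u$ and $w$. A more self-contained route would be to derive a Wente-type inequality directly: using the divergence form from the first step, one integrates by parts twice against $G$ to obtain an expression involving only $L^2$ products of first derivatives of $u,w$ paired against kernels whose remaining derivatives of $G$ are integrable; the chief nuisance there is tracking the boundary contributions, since $u,w$ themselves need not vanish on $\partial\Omega$, which forces one to absorb trace terms via the Dirichlet data of $G$ rather than of the data. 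Finally, for the bilinear statement one can either invoke the bilinearity built into CLMS directly, or polarize $v(u,w) = \tfrac{1}{2}\bigl(v(u+w) - v(u) - v(w)\bigr)$ and then rescale $u \mapsto \lambda u$, $w \mapsto \lambda^{-1} w$, optimizing over $\lambda > 0$ to upgrade the symmetric quadratic bound to the sharp bilinear form $||u||_2 \, ||w||_2$.
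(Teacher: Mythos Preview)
Your proposal is correct and matches the method the paper itself indicates: the paper does not prove this theorem in-text but cites \cite{springer} and remarks that the result rests on ``Hardy--Lizorkin spaces and compensated compactness methods.'' Your route---the divergence/Jacobian structure of $[u,w]$, the Coifman--Lions--Meyer--Semmes $\mathcal{H}^1$ estimate, and $\mathcal{H}^1$--$BMO$ duality against the logarithmically singular second derivatives of the clamped biharmonic Green's function---is exactly the argument behind that citation, and your identification of the extension-to-$\mathbb{R}^2$ step and the boundary bookkeeping as the places requiring care is accurate.
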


\noindent Making use of the above inequalities, we have the estimate
\begin{equation*}
||[v(u),z]||\leq C||u(t)||_{2}^{2}||z||_{2}\leq C(R)||z||.
\end{equation*}%
Additionally, we have
\begin{equation*}
||v(u)-v(w)||_{W^{2,\infty }}=||v(z,u+w)||_{W^{2,\infty }}\leq
C||z||_{2}\left( ||u||_{2}+||w||_{2}\right) .
\end{equation*}%
Therefore,
\begin{equation*}
||\mathcal{F}%
(z)||=||[u,v(u)]-[w,v(w)]+[z,F_{0}]||=||[v(u)-v(w),z]+[v(w),z]+[z,F_{0}]||%
\leq C(R)||z||_{2},\text{ \ \ \ \ }t\geq 0.
\end{equation*}%
So we obtain
\begin{equation}
\int_{0}^{T}\int_{\omega }\mathcal{F}(z)z\leq \int_{Q}\mathcal{F}(z)z\leq
\epsilon \int_{0}^{T}||z(t)||_{2}^{2}dt+C(T,\epsilon )l.o.t.^{z}  \tag{3.22}
\end{equation}%
and similarly
\begin{equation}
\int_{Q}\mathcal{F}(z)h\nabla \psi \leq C(R)\int_{0}^{T}||z(t)||_{2}||\psi
(t)||_{1}\leq \epsilon \int_{0}^{T}||z(t)||_{2}^{2}+C(T,\epsilon )l.o.t.^{z},
\tag{3.23}
\end{equation}%
(where again, dependence of constants on $\Omega ,\omega ,$ and $h$ are
supressed). To proceed, we need estimates on the dissipation. By the energy
equality
\begin{equation}
E_{z}(T)+\int_{s}^{T}\int_{\Omega }\mathcal{G}(z)z_{t}=E_{z}(s)+\int_{s}^{T}%
\int_{\Omega }\mathcal{F}(z)z_{t},  \tag{3.24}
\end{equation}%
we have
\begin{equation}
\int_{Q}\mathcal{G}(z)z_{t}\leq C(R)+\Big |\int_{Q}\mathcal{F}(z)z_{t} \Big|
\tag{3.25}
\end{equation}%
Taking into account the embedding $H^{2-\eta }(\Omega )\subset C(\Omega )$
for $0<\eta <1,$ we see
\begin{align*}
\int_{Q}\mathcal{G}(z)z\leq & ~\int_{Q}d(\mathbf{x})\big(%
|g(u_{t})|+|g(w_{t})|\big)|z| \\
\leq &~C||z||_{C(0,T;C(\Omega ))}\int_{Q}d(\mathbf{x})\big(%
|g(u_{t})|+|g(w_{t})|\big) \\
\le & ~ C||z||_{C(0,T;H^{2-\eta }(\Omega ))}\int_{Q}d(\mathbf{x})\left(
|g(u_{t})|+|g(w_{t})|\right) .
\end{align*}%
Splitting the region of integration according to $|u_{t}|\leq 1 $ and $%
|u_{t}|> 1,$ and similarly according to $|w_{t}|\leq 1 $ and $|w_{t}|> 1 $,
we obtain
\begin{equation*}
\int_{Q} d(\mathbf{x}) \big( |g(u_t)| + (g(w_t) | \big) \leq g(1)
||d||_{L_{\infty}(\Omega)} \text{meas}( Q)+\int_Q d(\mathbf{x})\big( g(u_t)
u_t + g(w_t) w_t \big) \leq C(R, T)
\end{equation*}
Hence
\begin{equation}
\int_{Q}\mathcal{G}(z)z\leq C(R,T)l.o.t._{1}^{z}  \tag{3.26}
\end{equation}%
Now applying Holder's inequality with the exponent $r>1$ we see
\begin{equation*}
\int_{Q}\mathcal{G}(z)h\nabla \psi \leq C\sup_{[0,T]}||\nabla \psi
(t)||_{r^{\prime }}\int_{Q}d(\mathbf{x})^{r}\left(
|g(u_{t})|^{r}+|g(w_{t})|^{r}\right)
\end{equation*}%
where $\displaystyle \frac{1}{r}+\frac{1}{r^{^{\prime }}}=1$. \ Taking $%
\displaystyle r=1+\frac{1}{p+1}$, and again splitting the region of
integration according to $|u_{t}|\leq 1$ and $|u_{t}|>1$, and using the
polynomial growth condition imposed on $g$ in Assumption 1, we obtain
\begin{equation*}
\int_{Q}d(\mathbf{x})^{r}|g(u_{t})|^{r} \leq C(d)\big\{g(1) \text{meas}(Q)+
\int_Q d(\mathbf{x}) g(u_t) u_t \big\} \leq C(R)(T+1)
\end{equation*}%
Since the same computations hold for terms in $w$, and we have the
continuous embedding $\displaystyle H^{1-\delta }(\Omega )\hookrightarrow
L_{r^{\prime }}(\Omega )$ for sufficiently small $\delta $, we have
\begin{equation}
\int_{Q}\mathcal{G}(z)h\nabla \psi \leq C(R,T)l.o.t._{1}^{z}  \tag{3.27}
\end{equation}%
Hence by the above estimates, we have
\begin{equation*}
\int_{0}^{T}E_{z}(t)\leq C\{E_{z}(T)+E_{z}(0)+\delta +C(R,\delta )+C(\delta
)\int_{Q}\mathcal{F}(z)z_{t}+C(R,T)\big(l.o.t.^{z}+l.o.t._{1}^{z}\big)\}
\end{equation*}%
and eventually by (3.24) we have
\begin{equation}
\int_{0}^{T}E_{z}(t)\leq C_{\ast }\Big\{E(T)+\delta +C(R,\delta )+C(\delta )%
\Big|\int_{0}^{T}\int_{\Omega }\mathcal{F}(z)z_{t}\Big|+C(R,T)\big(%
l.o.t.^{z}+l.o.t._{1}^{z}\big)\Big\}  \tag{3.28}
\end{equation}%
where we write $C_{\ast }$ to emphasize that this constant \textit{does not}
depend on $T$. If we integrate (3.24) over $(0,T)$ with respect to the
variable $s$, and take into account (3.28), we may choose $T$ sufficiently
large ($T>2C_{\ast }$) and $\epsilon $ sufficiently small (with respect to $%
T $) such that

\begin{lemma}[Asymptotic Smoothness Estimate]
\begin{equation}
E_{z}(T)\leq \epsilon +\frac{C(R,\epsilon )}{T}\left( 1+\Big|\int_{Q}%
\mathcal{F}(z)z_{t}\Big|+\Big|\int_{0}^{T}\int_{s}^{T}\int_{\Omega }\mathcal{%
F}(z)z_{t}\Big|\right) +C(\epsilon, R, T)(l.o.t.^{z}+l.o.t._{1}^{z})
\tag{3.29}
\end{equation}
\end{lemma}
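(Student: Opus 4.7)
The estimate (3.29) will follow from combining (3.28) with the energy identity (3.24). The key observation is that the constant $C_{\ast}$ in (3.28) does not depend on $T$, so for $T$ large enough we will be able to convert the time-averaged bound (3.28) into a pointwise-in-$T$ bound on $E_z(T)$.

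The plan is as follows. First, I would integrate the energy identity (3.24) with respect to the lower endpoint $s$ over $(0,T)$. Since $\mathcal{G}(z)\, z_t = d(\mathbf{x})\bigl(g(u_t)-g(w_t)\bigr)(u_t-w_t) \geq 0$ by monotonicity of $g$ together with nonnegativity of $d$, the resulting double time integral of the dissipation is nonnegative and can be dropped, yielding
\begin{equation*}
T\, E_z(T) \;\leq\; \int_0^T E_z(s)\,ds \;+\; \Big|\int_0^T\!\int_s^T\!\int_\Omega \mathcal{F}(z)\, z_t\Big|.
\end{equation*}
Next, I would substitute the already-proved bound (3.28) for $\int_0^T E_z(s)\,ds$ into the right-hand side. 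This produces a term $C_{\ast} E_z(T)$ on the right; since $C_{\ast}$ is $T$-independent, choosing $T > 2C_{\ast}$ lets this term be absorbed back into $T\, E_z(T)$ on the left, leaving at least $(T/2)\, E_z(T)$.

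Finally, I would divide by $T/2$ and choose the free parameter $\delta$ appearing in (3.28). Division produces terms of the form $(2C_{\ast}/T)\delta$ and $(2C_{\ast}/T)C(R,\delta)$; given any $\epsilon > 0$, taking $\delta$ sufficiently small (depending on $\epsilon$, $T$, $R$) makes the first at most $\epsilon$, while the second gets absorbed into the $1/T$-weighted, $\epsilon$- and $R$-dependent coefficient multiplying the two $\mathcal{F}(z)z_t$ terms. The lower-order terms are unchanged in form, with the constant now depending additionally on $\delta = \delta(\epsilon)$. After relabeling, this is exactly (3.29). The only genuinely nontrivial step is the first one: it is the sign of $\mathcal{G}(z)z_t$ that converts the time-averaged estimate (3.28) into a pointwise-in-$T$ estimate for $E_z(T)$, and the rest of the argument is bookkeeping and parameter selection.
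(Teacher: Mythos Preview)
Your argument is correct and follows exactly the route sketched in the paper: integrate the energy relation (3.24) in $s$ over $(0,T)$, drop the nonnegative dissipation term by monotonicity of $g$ and nonnegativity of $d$, insert (3.28) for $\int_0^T E_z$, absorb $C_\ast E_z(T)$ using $T>2C_\ast$, and finally fix $\delta$ small relative to $\epsilon$. The paper compresses all of this into a single sentence, and you have supplied the details faithfully.
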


We are now in a position to prove Theorem \ref{t:1} on the existence of a
compact attracting set $\mathbf{A}$. For this we shall invoke the abstract
Theorem \ref{psi}. \newline
\newline
\emph{Completion of the proof of Theorem \ref{t:1}}\newline
\newline
To apply Theorem \ref{psi} we need to construct a functional $\Phi
_{\epsilon ,R,T}$ such that
\begin{equation*}
\underset{m\rightarrow \infty }{\lim \inf }\underset{n\rightarrow \infty }{
\lim \inf }\Phi _{\epsilon ,R,T}(y_{n},y_{m})=0
\end{equation*}
for every sequence $\{y_{n}\}$ from B (following from Theorem \ref{psi}).
The functional will contain ``noncompact and not-small" terms in the
inequality (3.29). More specifically, for any initial data $U_0=(u_0,u_1),
~W_0=(w_0,w_1) \in B $ we define
\begin{equation*}
\widetilde{\Phi} _{\epsilon ,R,T}(U_0, W_0) = \Big|\int_0^T (\mathcal{F}(z),
z_t )\Big| +\Big |\int_0^T \int_t^T (\mathcal{F}(z) ,z_t ) \Big|
\end{equation*}
where the trajectory $z = u -w $ has initial data $U_0-W_0$. The key to
compensated compactness is the following representation for the bracket:
\begin{equation}
(\mathcal{F}(z) ,z_t)= \frac{1}{4} \frac{d}{dt} \big\{ - ||\Delta v(u) ||^2
- ||\Delta v(w) ||^2 + 2 ([z,z],F_0) \big\}-\big( [ v(w),w], u_t\big) - %
\big( [ v(u) , u ], w_t\big)  \tag{3.30}
\end{equation}
Integrating the above expression in time and evaluating on the difference of
two solutions $z^{n,m} = w^n - w^m$, where $w^i \rightharpoonup w$ yields:
\begin{align}
\lim_{n\to \infty}\lim_{m\to \infty} \int_t^T (\mathcal{F}(z^{n,m})
,z_t^{n,m} ) =& \dfrac{1}{2} \big\{ ||\Delta v(w)(t) ||^2 - ||\Delta v(w)
(T)||^2 \big\}  \tag{3.31} \\
- \lim_{n \to \infty}& \lim_{m \to \infty} \int_0^T \big\{ \big( [ v(w^n),
w^n ], w_t^m\big) + \big( [ v(w^m), w^m ], w_t^n\big)\big\} ,  \notag
\end{align}
where we have used (a) the weak convergence in $H^2(\Omega)$ of $z^{n,m } $
to 0, and (b) compactness of $\Delta v(w) $ from $H^2(\Omega) \rightarrow
L_2(\Omega) $. The iterated limit in (3.31) is handled via iterated weak
convergence, as follows:
\begin{equation*}
\lim_{n \to \infty} \lim_{m \to \infty} \int_0^T \big\{ \big( [ v(w^n), w^n
], w_t^m\big) + \big( [ v(w^m), w^m ], w_t^n\big)\big\}
\end{equation*}
\begin{equation*}
= 2 \int_t^T ( [ v(w), w] , w_t) = \frac{1}{2} ||\Delta v(w)(t) ||^2 - \frac{%
1}{2} || \Delta v(w)(T) ||^2.
\end{equation*}

\noindent This yields the desired conclusion, that
\begin{equation*}
\lim_{n\rightarrow \infty }\lim_{m\rightarrow \infty }\int_{t}^{T}(\mathcal{F%
}(z^{n,m}),z_{t}^{n,m})=0.
\end{equation*}%
The second integral term in $\widetilde{\Phi }$ is handled similarly. As a
consequence we obtain
\begin{equation*}
\underset{m\rightarrow \infty }{\lim \inf }\underset{n\rightarrow \infty }{%
\lim \inf }\widetilde{\Phi }_{\epsilon ,R,T}(y_{n},y_{m})=0.
\end{equation*}%
\noindent Now, we define
\begin{equation*}
\Phi _{\epsilon ,R,T}=\widetilde{\Phi }+(l.o.t.^{z}+l.o.t._{1}^{z}),
\end{equation*}%
and noting that the terms $(l.o.t.^{z}+l.o.t._{1}^{z})$ in (3.29) are
compact with respect to $H^{2}(\Omega )$ via the Sobolev embeddings, the
final conclusion follows by taking $T$ sufficiently large and $\epsilon $
sufficiently small. This concludes the proof of smoothness estimate required
by Theorem \ref{psi}. Thus, the dynamical system is asymptotically smooth.
In addition, stationary solutions are bounded (due to the inequality in
Lemma 1.1) and the set $\{(u_{0},u_{1})\in \mathcal{H},~{\mathscr{E}}%
(u_{0},u_{1})\leq R\}$ is positively invariant. This implies (\cite{springer}) the existence of local attractors $\mathbf{A}_{\mathbf{R}}$ - the first
statement in Theorem \ref{t:1}. For the second statement we just note that
the \textit{UC} property renders the dynamical system gradient with a
strict Lyapunov function. Thus, asymptotic smoothness and boundedness of the
set of stationary solutions implies the existence of a compact global attractor.

\section{Regularity and finite dimensionality of the Attractor}

Let $\mathbf{A}$ be the global attractor corresponding to the flow $S_{t}$,
as established in Section 3. To prove finiteness of the fractal dimension of
the set $\mathbf{A}$, we shall use Theorem \ref{t:FD} which is based on a
\textit{\ quasistability} estimate. It will turn out that the quasistability
estimate will follow from a more direct estimate written for the difference
of two solutions $z=u-w$ at sufficiently \textit{negative times}. Since the
system is gradient, the latter due to the \textit{UC} property assumed, we have
that trajectories from the attractor stabilize asmyptotically to equilibria
points for both positive and negative times. This is to say: any $x \in
\mathbf{A} $ belongs to some full trajectory $\gamma = \{ (u(t), u_t(t) ) ,
t \in R\} $ and for any $\gamma \subset \mathbf{A }$ there exists a pair $\{
e, e^*\} \subset \mathcal{N} $ (set of equilibria) such that
\begin{equation}  \label{equil}
(u(t), u_t(t) ) \rightarrow (e, 0 ) ~as~ t\rightarrow \infty , ~~(u(t),
u_t(t) ) \rightarrow (e^*, 0 ) ~as~ t\rightarrow - \infty
\end{equation}

\subsection{Quasistability Estimate}

We shall follow a general program developed in \cite{ch-l-jde07} and
supported by PDE estimates derived in previous sections and specific to
localized dissipation.

With the previous notation, we state the following lemma which gives a
preliminary estimate for quasistability inequality:

\begin{lemma}
\label{l:Q} Let $z\equiv u-w$ where $(u(t),u_{t}(t)),~(w(t),w_{t}(t)) \in
\mathbf{A}$ with $w(t) = u(t+h) $, $0 < h\leq1$. Then, there exists $T_u \in
\mathbb{R}$ such that for all $-\infty < s \leq s+T_{0} \leq T_u $ the
following inequality holds:
\begin{equation}
E_{z}(s+T_{0})+\int_{s}^{s+T_{0}}E_{z}(\tau)\leq C(\mathbf{A})T_{0}D_{s}^{s+
T_0}+C(\mathbf{A})T_{0} \sup_{\tau\in \lbrack s,s+T_0]}||z(\tau)||_{2-\eta
}^{2}  \tag{4.2}
\end{equation}
for $\eta >0$, where
\begin{equation*}
D_{t_1}^{t_2}\equiv \int_{t_1}^{t_2}\int_{\Omega }d(\mathbf{x}%
)(g(u_{t})-g(w_{t}))z_{t}
\end{equation*}
\end{lemma}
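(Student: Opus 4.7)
The approach is to adapt the multiplier-based energy recovery machinery of Section 3 to this special difference $z = u - w$ with $w(t) = u(t+h)$ on the attractor, with one decisive new ingredient: by the gradient structure and (\ref{equil}), both $u(t)$ and $w(t) = u(t+h)$ converge in $\mathcal{H}$ to the \emph{same} $\alpha$-limit equilibrium $e^*$ as $t \to -\infty$, so $\|z(t)\|_{2} \to 0$ and any quantity built bilinearly from $v(u)-v(w) = v(z,u+w)$ becomes arbitrarily small once $t \le T_u$ is negative enough. This smallness is what lets us absorb the critical von Karman contributions into the compact term $\sup_{[s,s+T_0]}\|z\|_{2-\eta}^2$ without leaving an $\epsilon$ on the right-hand side.

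\textbf{Step 1 (energy recovery and damping).} Apply Lemma \ref{l:rec} to $z$ on $[s,s+T_0]$ and combine it with the energy identity (3.24) for $z$ to eliminate $E_z(s)$ in favor of $E_z(s+T_0) + D_s^{s+T_0} + \bigl|\int_s^{s+T_0}(\mathcal{F}(z),z_t)\bigr|$. For the interior velocity integral $\int_s^{s+T_0}\int_\omega |z_t|^2$, split the integration region on $\{|u_t|,|w_t|\le 1\}$ versus its complement: on the bounded set $g'$ is continuous hence bounded and, writing $g(u_t)-g(w_t) = g'(\xi)z_t$, we get $|z_t|^2 \le m^{-1} g'(\xi)z_t^2$; on the complement Assumption \ref{g} applies directly. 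Since $d \ge \alpha_0$ on $\omega$, this yields $\int_s^{s+T_0}\int_\omega |z_t|^2 \le C(\mathbf{A})D_s^{s+T_0}$. For the mixed terms $\int\lambda\mathcal{G}(z)\phi$ and $\int\mu\mathcal{G}(z)(h\cdot\nabla\psi)$ use Cauchy-Schwarz together with the pointwise factorization $|g(u_t)-g(w_t)|^2 = g'(\xi)\cdot g'(\xi)z_t^2$ to peel off a $(D_s^{s+T_0})^{1/2}$. The remaining factor $\int d\,g'(\xi)\,|\text{test}|^2$ is controlled via the polynomial upper bound in Assumption \ref{g}, Hölder in $L_{(p+2)/p}\cdot L_{(p+2)/2}$, the $2$-D embedding $H^{2-\eta}\hookrightarrow L_\infty$, and the uniform bound on $\int_\Omega d\,g(u_t)u_t$ along the attractor. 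A Young's inequality produces $\epsilon D_s^{s+T_0} + C_{\epsilon}T_0\,C(\mathbf{A})\sup\|z\|_{2-\eta}^2$.

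\textbf{Step 2 (critical source terms).} For the noncompact source terms $\int\lambda \mathcal{F}(z)\phi$ and $\int\mu\mathcal{F}(z)(h\cdot\nabla\psi)$, invoke Theorem \ref{t:3.4} so that $\|\mathcal{F}(z)\|\le C(\mathbf{A})\|z\|_{2}$, and then interpolation/duality between $H^2$ and $H^{2-\eta}$ gives a bound of the form $\epsilon\int_s^{s+T_0}\|z\|_2^2 + C_\epsilon T_0\sup\|z\|_{2-\eta}^2$; the first term is absorbed into $\int_s^{s+T_0} E_z$ on the left-hand side of Lemma \ref{l:rec}.

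\textbf{Step 3 (main obstacle: $\int(\mathcal{F}(z),z_t)$ and the role of $T_u$).} The hard term is $\bigl|\int_s^{s+T_0}(\mathcal{F}(z),z_t)\bigr|$, which I handle via the compensated-compactness identity (3.30). The non-derivative cross terms $([v(w),w],u_t)$ and $([v(u),u],w_t)$ are controlled by Airy regularity and Hölder just as in Section 3, producing a bound by $T_0\, C(\mathbf{A})\sup\|z\|_{2-\eta}^2$ after the polynomial-growth/Hölder argument is applied to $u_t,w_t$. The obstacle is the time-endpoint contribution $\bigl[\tfrac{1}{4}(-\|\Delta v(u)\|^2 - \|\Delta v(w)\|^2 + 2([z,z],F_0))\bigr]_s^{s+T_0}$, which a priori is only bounded linearly in $\|z\|_2$ via the identity $v(u)-v(w) = v(z,u+w)$ and the bilinear sharp regularity $\|\Delta v(z,u+w)\|\le C(\mathbf{A})\|z\|_2$. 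To upgrade it to quadratic control by $\sup\|z\|_{2-\eta}^2$, use the decomposition $\Delta v(u)+\Delta v(w) = 2\Delta v(e^*) + \Delta v(u-e^*,u+e^*) + \Delta v(w-e^*,w+e^*)$: the last two pieces are $o(1)$ as $t \to -\infty$ (since $u,w \to e^*$ in $H^2$), and the contribution of $2\Delta v(e^*)$ pairs with $\Delta v(z,u+w)$ to produce, after integration by parts using $\Delta^2 v(e^*)=-[e^*,e^*]$ and the bracket symmetry, a term controlled by $\|z\|_{1}^2 \le \|z\|_{2-\eta}^2$. The $([z,z],F_0)$ contribution is handled analogously, with one integration by parts using $F_0 \in H^\theta$, $\theta>3$, to move two derivatives off $z$. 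Choosing $T_u$ sufficiently negative that the $o(1)$ quantities above are all smaller than a prescribed threshold delivers the stated inequality. Collecting the bounds from Steps~1--3 and absorbing $\epsilon$-multiples of $\int E_z$ completes the proof; the main obstacle is precisely this endpoint term, whose quadratic smallness relies on the shared $\alpha$-limit of $u$ and $w = u(\cdot + h)$.
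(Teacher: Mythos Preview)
Your Steps 1 and 2 are essentially the paper's route (Proposition 4.2 for the damping terms, sharp Airy regularity for the source against $\phi$ and $h\cdot\nabla\psi$), and your use of the lower bound $g'\ge m>0$ to control $\int_\omega|z_t|^2$ by $D_s^{s+T_0}$ is exactly what the paper does. The genuine gap is in Step 3.

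You invoke identity (3.30), but that identity is tailored to the compensated-compactness argument (iterated weak limits) and is \emph{not} quadratic in $z$. Concretely: the ``cross terms'' $([v(w),w],u_t)$ and $([v(u),u],w_t)$ contain no factor of $z$ whatsoever, so they cannot be bounded by $T_0\,C(\mathbf A)\sup\|z\|_{2-\eta}^2$ as you claim; likewise the endpoint pieces $\|\Delta v(u)\|^2$ and $\|\Delta v(w)\|^2$ carry no $z$. Your attempted repair via the shared $\alpha$-limit $e^*$ produces smallness of $u-e^*$, $w-e^*$, but smallness is not quadratic control in $z$; you would at best get $o(1)\cdot C(\mathbf A)$, not $C(\mathbf A)\|z\|_{2-\eta}^2$, and the inequality (4.2) as stated would fail.

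The paper instead uses a different algebraic decomposition (Proposition 4.3),
\[
(\mathcal F(z),z_t)=\tfrac14\tfrac{d}{dt}Q(z)+\tfrac12 P(z),\qquad
Q(z)=(v(u)+v(w),[z,z])-\|\Delta v(u{+}w,z)\|^2,
\]
\[
P(z)=-(u_t,[u,v(z,z)])-(w_t,[w,v(z,z)])-(u_t{+}w_t,[z,v(u{+}w,z)]),
\]
which is manifestly quadratic in $z$. This yields (Lemma 4.4)
\[
\Bigl|\int_s^t(\mathcal F(z),z_t)\Bigr|\le C(R)\sup_{[s,t]}\|z\|_{2-\eta}^2+C(R)\int_s^t\bigl(\|u_t\|+\|w_t\|\bigr)\|z\|_2^2 .
\]
The role of $T_u$ is then to make the \emph{velocities} $\|u_t(\tau)\|+\|u_t(\tau{+}h)\|$ small (from $(u,u_t)\to(e^*,0)$ as $t\to-\infty$), so that the last integral becomes $\epsilon\int_s^t\|z\|_2^2$ and is absorbed into $\int E_z$. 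You identified the right mechanism (backward convergence to an equilibrium) but applied it to the wrong quantity: it is the smallness of $\|u_t\|$, not of $\|z\|_2$, that closes the estimate. Replacing (3.30) by Proposition 4.3 and Lemma 4.4 fixes Step 3; the rest of your outline then matches the paper's completion in Section 4.3.
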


This lemma will provide a tool for proving additional smoothness of the
attractor. In fact, we will select a time $T$ which will be negative (in
line with the convergence in (\ref{equil})). The above estimate, a
posteriori, will yield the quasistability estimate in Theorem \ref{t:FD},
and hence finite dimensionality of the attractor. More specifically, we
shall follow the program described below after completing the proof of the
above lemma: (1) Apply Lemma \ref{l:Q} with some negative time $s+T $, to
conclude (by homogenity) that the solution at $T$ has additional derivatives
- and is in fact a strong solution. For this step, we will exploit
closedness of velocities to a point of equilibria, along with a
decomposition of the von Karman bracket, which allows us to take the
advantage of this closedness. (2) The above gain of regularity is then
propagated forward on the strength of regularity theory for von Karman
evolutions. This gives an algebraic embedding for the attractor, but
boundedness of the attractor in the higher norm will remain to be
established; (3) This boundedness, that is, the topological embedding, will
follow from a covering theorem, and the compactness of the attractor (shown
in the earlier section). These details are given in \cite{springer}. Thus,
at the end of this procedure we can extend validity of the inequality in
Lemma \ref{l:Q} to all times $-\infty < s < t < \infty$, and we obtain the
conclusion of the additional regularity of the attractor. This then allows
us to infer the \textit{quasistability } estimate in Theorem \ref{t:FD}
(which is the same estimate as in Lemma \ref{l:Q}, but valid for all times $%
s < t $ and all \textit{pairs of trajectories} $(u(t), u_t(t) ) $ and $%
(w(t), w_t(t) ) $ lying in the attractor.

Thus the crux of the proof of regularity and dimensionality of the attractor
reduces to the demonstration of Lemma \ref{l:Q}. We also note that in
comparison with the asymptotic smoothness inequality, the inequality in
Lemma \ref{l:Q} is more demanding. This is due to necessity of keeping
\textit{at least quadratic forms} in the lower order terms. This very demand
forces the damping to have at least linear growth at the origin $g^{\prime
}(0)>0$. (Such restriction is typical - if not necessary - whenever
regularity or finite dimensionality of attractors becomes a concern.)

\subsection{Preparation for the Proof of Lemma \protect\ref{l:Q}}

In the proof of Lemma \ref{l:Q}, we will make use of the recovery estimate
in (\ref{eq:319}) and the energy relation (3.24) as our main tools.
Beginning with (\ref{eq:319}), and taking into account estimates involving $%
\mathcal{F}(z)$ in (3.22), (3.23) and the linear growth condition $g^{\prime
}(0)>0$ in (3.19), we arrive at
\begin{equation*}
\int_0^T \int_{\Omega} |z_t|^2 \leq C D_0^T(z)
\end{equation*}
Applying the above inequality in (\ref{eq:319}) gives:
\begin{equation*}
\int_{0}^{T}E_{z}(\tau ) \leq C\Big\{D_{0}^{T}(z)+E_{z}(T)+E_{z}(0)+\left%
\vert \int_{Q}\mathcal{G}(z)z\right\vert  \notag
\end{equation*}
\begin{equation}  \label{eq:43}
+\left\vert \int_{Q}\mathcal{G}(z)h\nabla z\right\vert \Big\}%
+C(R,T)l.o.t.^{z}  \tag{4.3}
\end{equation}
Now, in tackling quadratic dependence of the dissipation terms, we give the
following proposition

\begin{proposition}
\label{p:1} Let assumptions of Theorem 1.4 be satisfied, and take $z$ be a
solution to (3.2). Then there exists $\delta >0$ such that
\begin{equation}
\left\vert \int_{Q}\mathcal{G}(z)z\right\vert \leq \delta
D_{0}^{T}(z)+C(\delta ,R,T)\underset{[0,T]}{\sup }||z||_{2-\eta
}^{2},~~0<\eta <2-\gamma  \tag{4.4}  \label{eq:44}
\end{equation}%
\begin{equation}
\left\vert \int_{Q}\mathcal{G}(z)h\nabla z\right\vert \leq \delta
D_{0}^{T}(z)+C(\delta ,R,T)\underset{[0,T]}{\sup }||z||_{2-\eta
}^{2},~~0<\eta <1-\gamma  \tag{4.5}  \label{eq:45}
\end{equation}%
where $\mathcal{G}(z)=d(\mathbf{x})\left( g(u_{t})-g(w_{t})\right) $ and $%
D_{0}^{T}(z)=$ $\displaystyle\int_{Q}\mathcal{G}(z)z_{t}.$
\end{proposition}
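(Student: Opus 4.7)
The plan is to exploit the mean-value representation $g(u_t)-g(w_t) = g'(\xi) z_t$ for some $\xi = \xi(t,\mathbf{x})$ between $u_t$ and $w_t$. This surfaces $d(\mathbf{x}) g'(\xi) z_t^2$ whose $Q$-integral equals $D_0^T(z)$, the target control. Cauchy--Schwarz with parameter $\delta>0$ gives
\begin{equation*}
\Big|\int_Q d(\mathbf{x})\, g'(\xi)\, z_t\, z \Big| \leq \delta\, D_0^T(z) + C_\delta \int_Q d(\mathbf{x})\, g'(\xi)\, z^2,
\end{equation*}
so \eqref{eq:44} reduces to bounding the rightmost integral by $C(\delta,R,T) \sup_{[0,T]}\|z\|_{2-\eta}^2$.

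The next step is to tame $g'(\xi)$ using the hypothesis $g'(s) \leq M[1+sg(s)]^\gamma$ with $\gamma<1$. Since $g$ is monotone with $g(0)=0$, the product $sg(s)$ is nonnegative and nondecreasing in $|s|$; as $\xi$ lies between $u_t$ and $w_t$, one obtains $\xi g(\xi) \leq u_t g(u_t) + w_t g(w_t)$, and therefore
\begin{equation*}
g'(\xi) \leq M\bigl[1 + u_t g(u_t) + w_t g(w_t)\bigr]^\gamma.
\end{equation*}
Factoring $d = d^\gamma \cdot d^{1-\gamma}$ and applying H\"older's inequality with conjugate exponents $1/\gamma$ and $1/(1-\gamma)$ then yields
\begin{equation*}
\int_Q d(\mathbf{x}) g'(\xi) z^2 \leq M \Big(\int_Q d(\mathbf{x})\bigl[1+u_tg(u_t)+w_tg(w_t)\bigr]\Big)^{\gamma} \Big(\int_Q d(\mathbf{x})\, z^{2/(1-\gamma)}\Big)^{1-\gamma}.
\end{equation*}

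To finish, the first H\"older factor is controlled by $C(R,T)$ via the energy identity \eqref{Eident} applied to $u$ and $w$ separately: both trajectories sit in a bounded set of $\mathcal{H}$, so their total dissipation integrals are uniformly bounded. For the second factor, the two-dimensional Sobolev embedding $H^{2-\eta}(\Omega) \hookrightarrow L^{2/(1-\gamma)}(\Omega)$---valid for $\eta < 2-\gamma$ via the threshold $s > 1 - 2/p$ on $\mathbb{R}^2$---produces
\begin{equation*}
\Big(\int_Q d(\mathbf{x})\, z^{2/(1-\gamma)}\Big)^{1-\gamma} \leq C\, T^{1-\gamma}\, \sup_{[0,T]} \|z(t)\|_{2-\eta}^2,
\end{equation*}
completing \eqref{eq:44}. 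The gradient estimate \eqref{eq:45} will follow by the identical argument with $h\cdot\nabla z$ replacing $z$; the only change is that the final Sobolev embedding becomes $H^{2-\eta}(\Omega)\hookrightarrow W^{1,2/(1-\gamma)}(\Omega)$, continuous for $\eta<1-\gamma$, which accounts for the tighter range of $\eta$.

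The main obstacle I anticipate is the unknown intermediate $\xi$: without access to a pointwise formula, the quadratic remainder $d(\mathbf{x}) g'(\xi) z^2$ threatens to be supercritical. The resolution is precisely the monotonicity of $sg(s)$ in $|s|$, which lets one majorize $g'(\xi)$ by an expression whose $d(\mathbf{x})$-weighted integral is controlled by the energy identity. The hypothesis $\gamma<1$ then renders $2/(1-\gamma)$ finite, so that the Sobolev-embedded remainder lives in a space strictly subcritical to the phase space $\mathcal{H}$, producing the desired compact quadratic lower-order term.
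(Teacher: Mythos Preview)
Your proof is correct and follows essentially the same route as the paper's own argument. The paper phrases the initial split via the pointwise inequality $|z| \leq \delta |z_t| + C(\delta)|z|^2/|z_t|$ together with the difference-quotient bound $\dfrac{g(s_2)-g(s_1)}{s_2-s_1} \leq C[1+s_1g(s_1)+s_2g(s_2)]^\gamma$, while you use the mean-value theorem and weighted Young's inequality; both lead to the identical remainder $\int_Q d(\mathbf{x})[1+u_tg(u_t)+w_tg(w_t)]^\gamma z^2$, which is then handled by the same H\"older/Sobolev/energy-identity combination you describe.
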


\begin{proof}
We note that the assumptions on the damping function $g$ (namely, montonicity and the polynomial growth condition in Assumption 1) imply that
\begin{equation}
\label{gs}\tag{4.6}
 \frac{g(s_2) - g(s_1) }{s_2 - s_1} \leq C [ 1 + g(s_1) s_1 + g(s_2) s_2 ]^{\gamma}.
 \end{equation}
 Using the Jensen inequality
 we estimate  \begin{equation*}\left\vert z\right\vert \leq \delta \left\vert
z_{t}\right\vert +C(\delta )\frac{\left\vert z\right\vert
^{2}}{\left\vert z_{t}\right\vert },\end{equation*} The above,
along with (\ref{gs}), gives
\begin{equation*}
\left\vert \int_{Q}\mathcal{G}(z)z\right\vert\leq \delta
D_{0}^{T}(z)+C(\delta ,M)\int_{Q}d(\xb)\left( 1+\left(
g_{0}(u_{t})u_{t}\right) ^{\gamma }+\left(
g_{0}(w_{t})w_{t}\right) ^{\gamma }\right) \left\vert z\right\vert
^{2}
\end{equation*}%
Now, applying the Holder inequality with exponent $p =
\gamma^{-1} $  and Sobolev's embedding $ \ds H^{ 2-\eta} (\Omega)
\subset L_{\frac{2}{1-\gamma}} (\Omega) $, and
taking into account energy equality (\ref{Eident}) we arrive at%
\begin{equation*}
\left\vert \int_{Q}\mathcal{G}(z)z\right\vert\leq \delta
D_{0}^{T}(z)+C(\delta ,R)l.o.t.^z
\end{equation*}%
The inequality in (\ref{eq:45}) can be shown analogously.
\end{proof}
So, taking into account (\ref{eq:44}) and (\ref{eq:45}) in (4.3) we obtain%
\begin{equation*}
\int_{0}^{T}E_{z}(\tau )\leq C\Big\{D_{0}^{T}(z)+E_{z}(T)+E_{z}(0)\Big\}%
+C(R,T)l.o.t.^{z}
\end{equation*}%
We note that $C$ does not depent on $T$, and $l.o.t.^{z}$ is of quadratic
order. By using semigroup property and reiterating the same argument on the
intervals $[s,s+T]$ one obtains
\begin{equation}
\int_{s}^{T+s}E_{z}(\tau )\leq C\Big\{D_{s}^{T+s}(z)+E_{z}(T+s)+E_{z}(s)%
\Big\}+C(R,T)l.o.t.^{z}(s,T+s)  \tag{4.7}  \label{4.7}
\end{equation}%
where $l.o.t.^{z}(s,T+s)$ denote lower order terms collected over the
interval $[s,T+s]$.

In order to prove (4.2), we have to handle the non-compact term $\left(
\mathcal{F}(z),z_{t}\right) $. A technical calculation based on the symmetry
properties of von Karman bracket gives us the following proposition whose
proof is given in \cite{springer}.

\begin{proposition}
If $u,w\in C([0,t];H^{2}(\Omega ))\cap C^{1}([0,t];L_{2}(\Omega ))$ and $%
z=u-w$ then%
\begin{equation}
\left(\mathcal{F}(z),z_{t}\right) =\dfrac{1}{4}\frac{d}{dt}Q(z)+\frac{1}{2}%
P(z)  \tag{4.8}
\end{equation}%
where
\begin{equation*}
Q(z)=\left( v(u)+v(w),[z,z]\right) -||\Delta v(u+w,z)||^2
\end{equation*}%
\begin{equation}
P(z)=-\left( u_{t},[u,v(z,z)]\right) -\left( w_{t},[w,v(z,z)]\right) -\left(
u_{t}+w_{t},[z,v(u+w,z)]\right) .  \tag{4.9}
\end{equation}
\end{proposition}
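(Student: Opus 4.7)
The proof is purely algebraic: I would process $(\mathcal{F}(z),z_t)$ using two structural symmetries, namely the trilinear symmetry $([p,q],r)=([p,r],q)$ of the von Karman bracket (valid when one factor has enough vanishing boundary data that the integrations by parts are free) and the Airy symmetry $(v(a,b),[c,d])=(v(c,d),[a,b])$ (immediate from $\Delta^{2}v(a,b)=-[a,b]$ together with $v=\partial_{\nu}v=0$ on $\Gamma$). As is standard, I would first establish the identity for regular solutions $u,w\in\mathscr{D}(A)$ and extend by density using the continuity hypotheses $u,w\in C([0,t];H^{2})\cap C^{1}([0,t];L_{2})$ and the sharp Airy regularity of Theorem \ref{t:3.4}.

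First, I would derive the algebraic splitting
\begin{equation*}
[v(u),u]-[v(w),w]=\tfrac{1}{2}[v(u)+v(w),z]+\tfrac{1}{2}[v(u+w,z),u+w],
\end{equation*}
which follows by averaging the two natural rewritings (add and subtract $[v(u),w]$, respectively $[v(w),u]$) together with the bilinear identity $v(u)-v(w)=v(u+w,z)$. Since $\mathcal{F}(z)=-([v(u),u]-[v(w),w])$, this reduces the problem to processing the two scalar products $([v(u)+v(w),z],z_{t})$ and $([v(u+w,z),u+w],z_{t})$. For the first, the trilinear symmetry gives $([v(u)+v(w),z],z_{t})=\tfrac{1}{2}(v(u)+v(w),\tfrac{d}{dt}[z,z])$; I would then pull out a full $\tfrac{d}{dt}(v(u)+v(w),[z,z])$ via the product rule and rewrite the residual $(\tfrac{d}{dt}v(u)+\tfrac{d}{dt}v(w),[z,z])=2(v(u,u_{t})+v(w,w_{t}),[z,z])$ via Airy symmetry as $2(u_{t},[u,v(z,z)])+2(w_{t},[w,v(z,z)])$, which are precisely the first two terms of $P(z)$. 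For the second piece, after using trilinear symmetry to pass to $(v(u+w,z),[u+w,z_{t}])$, I would compute $\tfrac{d}{dt}\|\Delta v(u+w,z)\|^{2}=-\tfrac{d}{dt}(v(u+w,z),[u+w,z])$: expanding the time derivative of the right-hand side and applying Airy symmetry to each resulting factor produces two copies of the target inner product, together with the asymmetric companion $(v(u+w,z),[u_{t}+w_{t},z])$, which the trilinear symmetry rewrites as $(u_{t}+w_{t},[z,v(u+w,z)])$. Solving algebraically then expresses $(v(u+w,z),[u+w,z_{t}])$ in terms of $\tfrac{d}{dt}\|\Delta v(u+w,z)\|^{2}$ and the last term of $P(z)$.

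Collecting the time-derivative contributions into $\tfrac{1}{4}\tfrac{d}{dt}Q(z)$ and the remaining inner products into $\tfrac{1}{2}P(z)$ closes the identity. The main delicate point is the bookkeeping: choosing the correct order in which to apply the two symmetries, and in particular making sure that each Airy-symmetry step is justified by the Dirichlet data of the relevant $v(\cdot,\cdot)$ and each trilinear-symmetry step by the boundary behaviour of the $H^{2}$-factor (which forces one to work first at the regular level and then pass to the limit). If $F_{0}\neq 0$, the extra contribution $-([F_{0},z],z_{t})=-\tfrac{1}{2}\tfrac{d}{dt}(F_{0},[z,z])$ folds harmlessly into the $Q$-term, so the identity as stated isolates the genuinely nonlinear Airy part.
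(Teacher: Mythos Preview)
Your proposal is correct and is precisely the standard argument: the paper itself does not prove this proposition but simply defers to \cite{springer}, and the computation you outline---the polarization identity $[v(u),u]-[v(w),w]=\tfrac{1}{2}[v(u)+v(w),z]+\tfrac{1}{2}[v(u+w,z),u+w]$ followed by repeated use of the two symmetries $([p,q],r)=(p,[q,r])$ and $(v(a,b),[c,d])=(v(c,d),[a,b])$---is exactly the calculation carried out in that reference. Your remarks on working first with regular solutions and on the $F_{0}$ contribution are also appropriate.
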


Now, we can state the following lemma:

\begin{lemma}
\ Let $u(\tau )$ and $w(\tau )$ be two functions from the class
\begin{equation*}
C\big([s,t];H_{0}^{2}(\Omega )\big)\cap C^{1}\big([s,t];L_{2}(\Omega )\big)
\end{equation*}%
for $s,t\in \mathbb{R},$ $s<t,$ such that
\begin{equation*}
||u(\tau )||_{2}^{2}+||u_{t}(\tau )||^{2}\leq R^{2},\text{ \ \ \ \ \ \ }%
||w(\tau )||_{2}^{2}+||w_{t}(\tau )||^{2}\leq R^{2},\text{ \ \ }\tau \in
\lbrack s,t]
\end{equation*}%
Let $z(\tau )=u(\tau )-w(\tau )$ . Then for $\eta >0$
\begin{equation}
\Big|\int_{s}^{t}\left( \mathcal{F}(z),z_{t}\right) \Big|\leq C(R)\underset{%
\tau \in \lbrack s,t]}{\sup }||z(\tau )||_{2-\eta }^{2}+C(R)\int_{s}^{t}\big(%
||u_{t}||+||w_{t}||\big)||z||_{2}^{2}.  \tag{4.10}
\end{equation}
\end{lemma}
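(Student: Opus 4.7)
The plan is to apply the identity from the preceding Proposition, integrate in time, and then estimate the resulting boundary term $Q(z)$ and the running integral of $P(z)$ separately with the two types of bounds stated. Concretely, integrating
\begin{equation*}
\big(\mathcal{F}(z),z_t\big)=\tfrac14\tfrac{d}{dt}Q(z)+\tfrac12 P(z)
\end{equation*}
from $s$ to $t$ yields
\begin{equation*}
\int_s^t\big(\mathcal{F}(z),z_t\big)\,d\tau=\tfrac14\big(Q(z(t))-Q(z(s))\big)+\tfrac12\int_s^tP(z)\,d\tau,
\end{equation*}
so it suffices to prove (i) $|Q(z(\tau))|\le C(R)\,\|z(\tau)\|_{2-\eta}^2$ pointwise in $\tau$ (which absorbs into the sup term) and (ii) $|P(z(\tau))|\le C(R)(\|u_t\|+\|w_t\|)\|z\|_2^2$ pointwise in $\tau$ (which absorbs into the integral term).

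For (ii), I would bound each of the three summands of $P(z)$ directly. For the representative term $(u_t,[u,v(z,z)])$, Cauchy--Schwarz gives
\begin{equation*}
\big|(u_t,[u,v(z,z)])\big|\le \|u_t\|\cdot\|[u,v(z,z)]\|,
\end{equation*}
and then the pointwise estimate $\|[u,v(z,z)]\|\le C\,\|u\|_2\|v(z,z)\|_{W^{2,\infty}}\le C(R)\|z\|_2^2$, which follows from Theorem \ref{t:3.4}, completes this term. The terms involving $w_t$ and the mixed term $(u_t+w_t,[z,v(u+w,z)])$ are handled identically, using $\|v(u+w,z)\|_{W^{2,\infty}}\le C\|u+w\|_2\|z\|_2\le C(R)\|z\|_2$ and pairing one $\|z\|_2$ factor with $\|z\|_2$ coming from the bracket. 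Summing gives the desired bound on $\int_s^t P(z)$.

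For (i), I would exploit the divergence structure of the von Karman bracket
\begin{equation*}
[z,z]=\partial_x^2(z_y^2)+\partial_y^2(z_x^2)-2\partial_{xy}^2(z_xz_y)
\end{equation*}
together with Theorem \ref{t:3.4}. Integrating by parts twice and using that $v(u),v(w)\in W^{2,\infty}$ vanish together with their normal derivatives on $\Gamma$ (so the boundary terms cancel) converts $(v(u)+v(w),[z,z])$ into an integral of second derivatives of $v(u)+v(w)$ against quadratic expressions in $\nabla z$; this is bounded by $C(\|v(u)\|_{W^{2,\infty}}+\|v(w)\|_{W^{2,\infty}})\,\|z\|_1^2\le C(R)\|z\|_{2-\eta}^2$ for any $\eta<1$. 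For the second piece of $Q(z)$, Green's identity and the symmetry of the bracket give
\begin{equation*}
\|\Delta v(u+w,z)\|^2=-(v(u+w,z),[u+w,z])=-\big([u+w,v(u+w,z)],z\big),
\end{equation*}
after which the same divergence-form integration by parts, together with the sharp estimate $\|v(u+w,z)\|_{W^{2,\infty}}\le C(R)\|z\|_2$, reduces one power of $\|z\|_2$ to a lower-order norm, producing the matching $C(R)\|z\|_{2-\eta}^2$ bound.

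The main obstacle is step (i): the naive estimates $|(v(u),[z,z])|\le C(R)\|z\|_2^2$ and $\|\Delta v(u+w,z)\|^2\le C(R)\|z\|_2^2$ are too weak, since $\|z\|_2\le 2R$ need not be small on the attractor, and would destroy the quadratic-in-$\|z\|_{2-\eta}$ structure of the RHS. The resolution is to recognize that both summands admit one further integration by parts through the divergence form of $[\cdot,\cdot]$, which converts one $H^2$-derivative of $z$ into an $H^1$-derivative; combined with the $L_\infty$ control on the Hessian of the Airy function from Theorem \ref{t:3.4}, this yields exactly the $\|z\|_1^2\le\|z\|_{2-\eta}^2$ bound required. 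All other estimates are routine applications of Cauchy--Schwarz and the sharp Airy regularity.
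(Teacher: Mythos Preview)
Your overall plan is exactly the approach the paper intends: integrate the decomposition from Proposition~4.3, bound $P(z)$ pointwise by $C(R)(\|u_t\|+\|w_t\|)\|z\|_2^2$ via Cauchy--Schwarz and the sharp Airy regularity of Theorem~\ref{t:3.4}, and bound the endpoint contributions $Q(z(s)),Q(z(t))$ by $C(R)\|z\|_{2-\eta}^2$ through the divergence structure of the bracket. Your treatment of $P(z)$ and of the first summand $(v(u)+v(w),[z,z])$ of $Q(z)$ is correct.

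There is, however, a gap in your handling of the second summand $\|\Delta v(u+w,z)\|^2$. After the symmetry step $-([u+w,v(u+w,z)],z)$ and one divergence-form integration by parts, the integrand is a trilinear expression in $\nabla(u+w)$, $D^2 v(u+w,z)$, and $\nabla z$. Invoking $\|v(u+w,z)\|_{W^{2,\infty}}\le C(R)\|z\|_2$ as you propose therefore yields a bound of the form $C(R)\,\|z\|_2\,\|z\|_{2-\eta}$: one factor of $\|z\|_2$ survives, and this product is \emph{not} dominated by $C(R)\|z\|_{2-\eta'}^2$ for any $\eta'>0$. That defeats precisely the quadratic-in-lower-order structure the lemma is meant to deliver (and which the paper stresses is essential for the quasistability argument).

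The remedy is to avoid the $W^{2,\infty}$ estimate for this term and instead bootstrap through the $H^2$ norm of $v(u+w,z)$ itself. From the symmetry identity (valid since $v(u+w,z)\in H_0^2(\Omega)$) and H\"older,
\[
\|\Delta v(u+w,z)\|^2=\big|(z,[u+w,v(u+w,z)])\big|\le \|z\|_{L_\infty(\Omega)}\,\big\|[u+w,v(u+w,z)]\big\|_{L_1(\Omega)}\le C\,\|z\|_{2-\eta}\,\|u+w\|_2\,\|\Delta v(u+w,z)\|,
\]
using $H^{2-\eta}(\Omega)\hookrightarrow C(\overline{\Omega})$ for $\eta<1$ in two dimensions. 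Cancelling one factor of $\|\Delta v(u+w,z)\|$ gives $\|\Delta v(u+w,z)\|\le C(R)\|z\|_{2-\eta}$, hence $\|\Delta v(u+w,z)\|^2\le C(R)\|z\|_{2-\eta}^2$. With this correction your argument goes through and matches the paper's (terse) proof.
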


\begin{proof}
 The inequality follows from the basic properties of von Karman bracket
 \cite{springer} and the decomposition in Proposition 4.3.
\end{proof}

\subsection{Completion of the Proof of Lemma \protect\ref{l:Q}.}

Let $\gamma =\{(u(t),u_{t}(t)):t\in \mathbb{R\}}$ be a trajectory from the
attractor $\mathbf{A}$, and let $0<h<1.$ It is clear that for the pair $%
w(t)\equiv u(t+h)$ and $u(t)$ satisfy the hypotheses of Lemma 4.4 for every
interval $[s,t]$. We shall estimate the energy $E_{z}(t)$ of $\ z(t)\equiv
z^{h}(t)=u(t+h)-u(t).$ Here we critically use the estimates for the
noncompact term involving $\mathcal{F}(z).$ By (4.10), we have
\begin{equation}
\Big|\int_{s}^{t}\left( \mathcal{F}(z^{h}),z_{t}^{h}\right) \Big|\leq C(R)%
\underset{\tau \in \lbrack s,t]}{\sup }||z^{h}(\tau )||_{2-\eta
}^{2}+C(R)\int_{s}^{t}\big(||u_{t}(\tau +h)||+||u_{t}(\tau )||\big)%
||z^{h}(\tau )||_{2}^{2}.  \tag{4.11}
\end{equation}%
for all $-\infty <s\leq t<+\infty .$ Since we have the characterization $%
\mathbf{A}$ $=M^{u}(\mathcal{N}),$ where $\mathcal{N}$ is the set of
equilibria, we have
\begin{equation*}
\underset{t\rightarrow -\infty }{\lim }d_{H_{0}^{2}(\Omega )\times
L_{2}(\Omega )}(S_{t}W|\mathcal{N}\ )=0\text{ \ \ for any }W\in
H_{0}^{2}(\Omega )\times L_{2}(\Omega )
\end{equation*}%
Hence, for any $\epsilon >0$, there exists $T_{\gamma }^{\epsilon }$
(independent of $h$ but depending on the trajectory $\gamma )$ such that
\begin{equation*}
||u_{t}(\tau )||+||u_{t}(\tau +h)||\leq \epsilon \left[ C(R)\right] ^{-1}%
\text{ \ \ for any }t\leq T_{\gamma }^{\epsilon }.
\end{equation*}%
Taking into account the last inequality in (4.11), we arrive at
\begin{equation}
\Big|\int_{s}^{t}\big(\mathcal{F}(z^{h}),z_{t}^{h}\big)\Big|\leq C(R)%
\underset{\tau \in \lbrack s,t]}{\sup }||z^{h}(\tau )||_{2-\eta
}^{2}+\epsilon \int_{s}^{t}||z^{h}(\tau )||_{2}^{2}.  \tag{4.12}
\end{equation}%
for all $-\infty <s\leq t<T_{\gamma }^{\epsilon }.$ Using the energy
relation (3.24), we find from (4.12) that
\begin{equation}
E_{z}(s)\leq ~E_{z}(t)+\int_{s}^{t}\int_{\Omega }\mathcal{G}(z)z_{t}+C(R)%
\underset{\tau \in \lbrack s,t]}{\sup }||z^{h}(\tau )||_{2-\eta
}^{2}+\epsilon \int_{s}^{t}||z^{h}(\tau )||_{2}^{2}d\tau .  \tag{4.13}
\end{equation}%
and similarly
\begin{equation}
E_{z}(t)\leq ~E_{z}(s)+C(R)\underset{\tau \in \lbrack s,t]}{\sup }%
||z^{h}(\tau )||_{2-\eta }^{2}+\epsilon \int_{s}^{t}||z^{h}(\tau )||_{2}^{2}.
\tag{4.14}
\end{equation}%
for all $-\infty <s\leq t<T_{\gamma }^{\epsilon }.$ Now, if we apply (4.7)
on each subinterval $[s,s+T_{0}]$, we have%
\begin{equation*}
\int_{s}^{s+T_{0}}E_{z}(\tau )\leq C\Big\{D_{s}^{s+T_{0}}(z)+\big( %
E_{z}(s+T_{0})+E_{z}(s)\big) \Big\}+C(R,T_{0})\underset{\tau \in \lbrack
s,s+T_{0}]}{\sup }||z(\tau )||_{2-\eta }^{2}
\end{equation*}%
Taking into account (4.13) in the last inequality and choosing $\epsilon $
sufficiently small we arrive at%
\begin{equation*}
\int_{s}^{s+T_{0}}E_{z}(\tau )\leq C\Big\{%
D_{s}^{s+T_{0}}(z)+E_{z}(s+T_{0})+C(R,T_{0})\sup_{\tau \in \lbrack
s,s+T_{0}]}||z(\tau )||_{2-\eta }^{2}\Big\}
\end{equation*}%
Now, integrating (4.14) and considering the previous inequality we have
\begin{equation*}
E_{z}(s+T_{0})+\int_{s}^{s+T_{0}}E_{z}(\tau )\leq C(\mathbf{A},
T_0)D_{s}^{s+T_{0}}(z)+C(\mathbf{A},T_{0})\underset{\tau \in \lbrack
s,s+T_{0}]}{\sup }||z(\tau )||_{2-\eta }^{2}
\end{equation*}%
which gives (4.2) and thus proves Lemma \ref{l:Q}.

\section{Proof of Theorem \protect\ref{t:2}}

\subsection{Proof of Part (i) in Theorem \protect\ref{t:2}}

Having established Lemma \ref{l:Q} we now proceed with the proof of improved
regularity for the attractor. This is done, as in \cite{ch-l-jde07}, in
three steps:

\textbf{Step 1: Smoothness for negative times}\newline
By (4.2) and energy relation (3.24) written on the interval $[s,s+T_{0}]$ we
can choose a constant $0<\mu <1$ such that $u^{h}(t)=h^{-1}z^{h}(t)$
satisfies the following estimate
\begin{equation}
E_{u^{h}}(s+T_{0})\leq \mu E_{u^{h}}(s)+C(T_{0})\underset{\tau \in \lbrack
0,T_{0}]}{\sup }||u^{h}(s+\tau )||_{2-\eta }^{2}  \tag{5.1}
\end{equation}%
for all $s\leq T_{\gamma }-T_{0},$ where $T_{\gamma }=T_{\gamma }^{\epsilon
_{0}}$ (depending on the trajectory, but not $h$) for some $\epsilon _{0}>0$
and $T_{0}>0.$ Now using interpolation, and taking the supremum over the
interval $(-\infty,T_{\gamma }-T_{0})$ we obtain
\begin{equation*}
\underset{\tau \in (-\infty ,T_{\gamma }]}{\sup }E_{u^{h}}(\tau )\leq \frac{%
1+\mu }{2}\underset{\tau \in (-\infty ,T_{\gamma }]}{\sup }E_{u^{h}}(\tau
)+C(T_{0})
\end{equation*}%
This implies that
\begin{equation}
E_{u^{h}}(s)\leq C(T_{0})\text{ \ \ \ for all }s\in (-\infty ,T_{\gamma }]
\tag{5.2}
\end{equation}%
After passing to the limit $h\rightarrow 0$ we get
\begin{equation}
||u_{tt}(t)||^{2}+||u_{t}(t)||_{2}^{2}\leq C\text{ \ for all }s\in (-\infty
,T_{\gamma }]  \tag{5.3}
\end{equation}%
By (5.3), for $u_{t}\in H^{2}(\Omega )\subset C(\Omega )$, we have $%
g(u_{t})\in C(\Omega )\subset L_{2}(\Omega )$ by the continuity of $g.$
Hence, elliptic regularity theory for $\Delta ^{2}u=-u_{tt}-d(x)g(u_{t})$
with the boundary conditions gives that $\ ||u(t)||_{4}^{2}\leq C$ for all $%
t\in \lbrack -\infty ,T_{\gamma }].$ \vskip.3cm \noindent\textbf{Step 2: Forward
propagation of the regularity }

Using the forward well-posedness of strong solutions stated in Theorem \ref%
{wellp} (and the discussion that follows), we observe that $u(t)$ is a
strong solution to the original problem, and so the global attractor $%
\mathbf{A}$\ is a subset of $\left( H^{4}\cap H_{0}^{2}\right) (\Omega
)\times H_{0}^{2}(\Omega ).$ \vskip.3cm
\noindent\textbf{Step 3: Boundedness of the
attractor in $\left( H^{4}\cap H_{0}^{2}\right)(\Omega)\times
H_{0}^{2}(\Omega ) $ }

In the previous step we have shown that $\mathbf{A}\subset $ $\left(
H^{4}\cap H_{0}^{2}\right) (\Omega )\times H_{0}^{2}(\Omega )$. However,
this does not guarantee the boundedness of $\mathbf{A}$ in $\left( H^{4}\cap
H_{0}^{2}\right) (\Omega )\times H_{0}^{2}(\Omega ).$ So, using the
compactness of the attractor, we will follow an additional argument.

Since for every $\tau \in \mathbb{R}$, the element $u_{t}(\tau )$ belongs to
a compact set in $L_{2}(\Omega )$ that consists of elements from $%
H_{0}^{2}(\Omega ),$ for every $\epsilon >0$ there exists a finite set $%
\{\phi _{j}\}\subset H_{0}^{2}(\Omega )$ such that we can find indices $%
j_{1} $ and $j_{2}$ such that
\begin{equation*}
||u_{t}(\tau )-\phi _{j_{1}}||+||u_{t}(\tau +h)-\phi _{j_{2}}||\leq \epsilon
.
\end{equation*}

Let $P(z)$ be given by (4.9) with the pair $w(t)=u(t+h)$ and $u(t)$ and
\begin{equation*}
P_{j_{1},j_{2}}(z)=-\left( \phi _{j_{1}},[u,v(z,z)]\right) -\left( \phi
_{j_{2}},[w,v(z,z)]\right) -\left( \phi _{j_{1}}+\phi
_{j_{2}},[z,v(u+w,z)]\right)
\end{equation*}%
where $z(t)=u(t+h)-u(t)\equiv z^{h}(t).$ It can be easily shown that
\begin{equation}
||P(z)-P_{j_{1},j_{2}}(z)||\leq \epsilon C(R)||z^{h}(\tau )||_{2}^{2}
\tag{5.4}
\end{equation}

and
\begin{equation*}
||P_{j_{1},j_{2}}(z)||\leq C(R)\left( ||\phi _{j_{1}}||_{2}+||\phi
_{j_{2}}||_{2}\right) ||z^{h}(\tau )||_{2-\eta }^{2}
\end{equation*}%
for $\eta >0.$ So we take
\begin{equation}
\underset{j_{1},j_{2}}{\sup }||P_{j_{1},j_{2}}(z)||\leq
C(\epsilon)||z^{h}(\tau )||_{2-\eta }^{2}  \tag{5.5}
\end{equation}%
for $\eta >0.$ Taking into account (5.4) and (5.5) in (4.8) we see
\begin{equation*}
\underset{t\in \lbrack 0,T]}{\sup }\Big|\int_{s+t}^{s+T}\left( \mathcal{F}%
(z^{h}),z_{t}^{h}\right) \Big|\leq C(\epsilon ,T,R)\underset{\tau \in
\lbrack 0,T]}{\sup }||z^{h}(\tau +s)||_{2-\eta }^{2}+\epsilon
\int_{s}^{t}||z^{h}(\tau )||_{2}^{2}
\end{equation*}%
for all $s\in \mathbb{R}$ with $\eta >0$ and $T>0$. Now applying the above
argument used to prove (5.1) and (5.2), we obtain the boundedness of the
attractor $\mathbf{A}$ in $\left( H^{4}\cap H_{0}^{2}\right) (\Omega )\times
H_{0}^{2}(\Omega ).$ \vskip.3cm This proves the first part of Theorem \ref%
{t:2} .

\subsection{ Proof of (ii) in Theorem \protect\ref{t:2} - finite
dimensionality}

Using the compactness of the attractor and (5.4)-(5.5), we can write
\begin{equation*}
\Big|\int_{s}^{T+s}\left( \mathcal{F}(z),z_{t}\right) \Big |\leq C(\epsilon )%
\big(1+T\big)\underset{\tau \in \lbrack s,s+T]}{\sup }||z^{h}(\tau
)||_{2-\eta }^{2}+\epsilon \int_{s}^{s+T}E_{z}(\tau )
\end{equation*}%
for $s \in R $ and $\eta >0,$ where $z(t)=u(t)-w(t)$ with $(u,u_{t})$ and $%
(w,w_{t})$ from the attractor. Again, applying the same procedure from
above, we obtain (5.1) for $u^{h}=z$ and $s\in \mathbb{R}.$ We then note
that (5.1) yields
\begin{equation*}
E_{z}\big((m+1)T\big)\leq \gamma E_{z}(mT)+C(\mathbf{A},T)b_{m},\text{ \ \ \
}m=0,1,2,...
\end{equation*}%
with $0<\gamma =\gamma (\mathbf{A})<1,$ where
\begin{equation*}
b_{m}\equiv \underset{\tau \in \lbrack mT,(m+1)T]}{\sup }||z(\tau )||^{2}
\end{equation*}%
This yields
\begin{equation*}
E_{z}(mT)\leq \gamma E_{z}(0)+c\sum_{l=1}^{m}\gamma ^{m-l}b_{l-1}
\end{equation*}

\noindent Since $\gamma <1$, there exist constants $C_{1},C_{2}$ and $\sigma
$ possibly depending on $R$ such that for all $t\geq 0$ we have
\begin{equation*}
E_{z}(t)\leq C_{1}E_{z}(0)e^{-\sigma t}+C_{2}\underset{\tau \in \lbrack 0,t]}%
{\sup }||z(\tau )||_{2-\eta }^{2}
\end{equation*}
which yields (2.2). Finally, on the strength of Theorem \ref{t:FD}, applied
with $B =\mathbf{A}$, $\mathcal{H} = D({\mathcal{A}}^{1/2}) \times
L_2(\Omega) $, $\mathcal{H}_1 = H^{2-\eta}(\Omega) \times \{0\} $ we
conclude that $\mathbf{A} $ has a finite fractal dimension.

\end{document}